\documentclass{my_aims}

\usepackage{amsmath}
\usepackage{paralist}
\usepackage{graphics}
\usepackage{graphicx}
\usepackage{psfrag}

\usepackage[colorlinks=true]{hyperref}

\hypersetup{urlcolor=blue, citecolor=red}


\textheight=8.2 true in \textwidth=5.0 true in \topmargin 30pt
\setcounter{page}{1}


\newtheorem{theorem}{Theorem}[section]
\newtheorem{corollary}[theorem]{Corollary}
\newtheorem{lemma}[theorem]{Lemma}
\theoremstyle{definition}
\newtheorem{definition}[theorem]{Definition}
\newtheorem{remark}[theorem]{Remark}
\newtheorem{example}[theorem]{Example}


\title[Euler-Lagrange equations on time scales]{Euler-Lagrange equations
for composition functionals in calculus of variations\\
on time scales}

\author[A.B. Malinowska and D.F.M. Torres]{}

\subjclass{Primary: 49K05, 39A12; Secondary: 49K99}

\keywords{Calculus of variations, composition functionals,
Euler-Lagrange equations, natural boundary conditions, isoperimetric
problems, time scales}

\email{abmalinowska@ua.pt} \email{delfim@ua.pt}


\begin{document}

\maketitle

\centerline{\scshape Agnieszka B. Malinowska}
\medskip
{\footnotesize
 \centerline{Department of Mathematics, University of Aveiro}
   \centerline{3810-193 Aveiro, Portugal}
   \centerline{Faculty of Computer Science, Bia{\l}ystok University of Technology}
   \centerline{15-351 Bia\l ystok, Poland}
}

\medskip

\centerline{\scshape Delfim F. M. Torres }
\medskip
{\footnotesize
 \centerline{Department of Mathematics, University of Aveiro}
   \centerline{3810-193 Aveiro, Portugal}
}

\bigskip

\centerline{(Submitted 10-May-2009; revised 10-March-2010; accepted 04-July-2010)}

\begin{abstract}
In this paper we consider the problem of the calculus of variations
for a functional which is the composition of a certain scalar
function $H$ with the delta integral of a vector valued field $f$,
i.e., of the form $H
\left(\int_{a}^{b}f(t,x^{\sigma}(t),x^{\Delta}(t))\Delta t \right)$.
Euler-Lagrange equations, natural boundary conditions for such
problems as well as a necessary optimality condition for
isoperimetric problems, on a general time scale, are given. A number
of corollaries are obtained, and several examples illustrating the
new results are discussed in detail.
\end{abstract}


\section{Introduction}

The calculus on time scales was introduced by Bernd Aulbach and
Stefan Hilger in 1988 \cite{Au:Hilger}. The new theory bridges the
divide and extends the traditional areas of continuous and discrete
analysis and the various dialects of $q$-calculus \cite{dt:qc} into
a single theory \cite{BP,book:ts1,1st:book:ts}. The calculus of
variations on time scales was born with the works
\cite{cv:02,b7,Zeidan} and has interesting applications in Economics
\cite{Almeida:T,Atici:Uysal:06,Atici:Uysal:08,RuiDel07,Nat}.
Currently, several researchers are getting interested in the new
theory and contributing to its development (see, \textrm{e.g.},
\cite{ZbigDel,Bohner:F:T,bhn:Gus,Rui:Del:HO,Mal:Tor:09,Mal:Tor:Wei,AD:09,AD:10b,AD:10c}).
The present work is dedicated to the study of general
(non-classical) problems of calculus of variations on an arbitrary
time scale $\mathbb{T}$. As a particular case, by choosing
$\mathbb{T} = \mathbb{R}$, one gets the generalized calculus of
variations \cite{CLP} with functionals of the form
\begin{equation*}
H \left(\int_{a}^{b}f(t,x(t),x'(t))dt \right),
\end{equation*}
where $f$ has $n$ components and $H$ has $n$ independent variables.
Cases of calculus of variations as these appear in practical
applications (see \cite{CLP} and the references given therein) but
cannot be solved using the classical theory. Therefore, an extension
of this theory is needed.

The paper is organized as follows. In Section~\ref{sec:prm}, some
preliminaries on time scales are presented. Our results are given in
Section~\ref{sec:Euler} and Section~\ref{sec:Iso}. We begin
Section~\ref{sec:Euler} by formulating the general (non-classical)
problem of calculus of variations \eqref{vp} on an arbitrary time
scale. We obtain a general formula for the Euler-Lagrange equations
and natural boundary conditions for the general problem
(Theorem~\ref{thm:mr}), which are then applied to the product
(Corollary~\ref{cproduct}) and the quotient
(Corollary~\ref{cquotient}). In Section~\ref{sec:Iso} we prove a
necessary optimality condition for the general isoperimetric problem
(Theorem~\ref{th:iso} and Theorem~\ref{th:iso:abn}). Throughout the
paper several examples illustrating the new results are discussed in
detail.


\section{Preliminaries}
\label{sec:prm}

The following definitions and theorems will serve as a short
introduction to the calculus of time scales; they can be found in
\cite{BP,book:ts1}.

A nonempty closed subset of $\mathbb{R}$ is called a \emph{time
scale} and it is denoted by $\mathbb{T}$. The real numbers
$(\mathbb{R})$, the integers $(\mathbb{Z})$, the natural numbers
$(\mathbb{N})$, the $h$-numbers ($h\mathbb{Z}:=\{h z | z \in
\mathbb{Z}\}$, where $h>0$ is a fixed real number), and the
$q$-numbers ($q^{\mathbb{N}_0}:=\{q^k | k \in \mathbb{N}_0\}$, where
$q>1$ is a fixed real number) are examples of time scales, as are
$\{0,\frac{1}{2},1\}$, $[2,3]\cup \mathbb{N}$, and $[-1,1]\cup
[2,3]$, where $[-1,1]$ and $[2,3]$ are real number intervals. We
assume that a time scale $\mathbb{T}$ has the topology that it
inherits from the real numbers with the standard topology.

\begin{definition}
For $t\in\mathbb{T}$ we define he \emph{forward jump operator}
$\sigma:\mathbb{T}\rightarrow\mathbb{T}$ by
\begin{equation*}
\sigma(t)=\inf{\{s\in\mathbb{T}:s>t}\}, \mbox{ for all
$t\in\mathbb{T}$},
\end{equation*}
while the \emph{backward jump operator}
$\rho:\mathbb{T}\rightarrow\mathbb{T}$ is defined by
\begin{equation*}
\rho(t)=\sup{\{s\in\mathbb{T}:s<t}\},\mbox{ for all
$t\in\mathbb{T}$}.
\end{equation*}
\end{definition}

In this definition we consider $\sigma(M)=M$ if $\mathbb{T}$ has a
maximum $M$ and $\rho(m)=m$ if $\mathbb{T}$ has a minimum $m$.

A point $t\in\mathbb{T}$ is called \emph{right-dense},
\emph{right-scattered}, \emph{left-dense} and \emph{left-scattered}
if $\sigma(t)=t$, $\sigma(t)>t$, $\rho(t)=t$ and $\rho(t)<t$,
respectively. Points that are simultaneously right-scattered and
left-scattered are called \emph{isolated}. Points that are
simultaneously right-dense and left-dense are called \emph{dense}.

The \emph{graininess function} $\mu:\mathbb{T}\rightarrow[0,\infty)$
is defined by
\begin{equation*}
\mu(t)=\sigma(t)-t,\mbox{ for all $t\in\mathbb{T}$}.
\end{equation*}

\begin{example}
If $\mathbb{T} = \mathbb{R}$, then $\sigma(t) = \rho(t) = t$ and
$\mu(t)= 0$. If $\mathbb{T} = \mathbb{Z}$, then $\sigma(t) = t + 1$,
$\rho(t) = t - 1$, and $\mu(t)= 1$. On the other hand, if
$\mathbb{T} = q^{\mathbb{N}_0}$, where $q>1$ is a fixed real number,
then we have $\sigma(t) = q t$, $\rho(t) = q^{-1} t$, and $\mu(t)=
(q-1) t$.
 \end{example}

\begin{definition}
A time scale $\mathbb{T}$ is called \emph{regular} if the following
two conditions are satisfied:
\begin{itemize}
\item[(i)] $\sigma(\rho(t))=t$, for all $t\in \mathbb{T}$; and
\item[(ii)] $\rho(\sigma((t))=t$, for all $t\in \mathbb{T}$.
\end{itemize}
\end{definition}

Following \cite{BP}, let us define
$$\mathbb{T}^{\kappa}=\left\{\begin{array}{lcl}
\mathbb{T}\setminus(\rho(\sup\mathbb{T}),\sup \mathbb{T}] &\mbox{if}&\sup \mathbb{T}<\infty \\
\mathbb{T} &\mbox{if}&\sup \mathbb{T}=\infty.
\end{array}\right.$$

\begin{definition}
\label{def:de:dif} We say that a function
$f:\mathbb{T}\rightarrow\mathbb{R}$ is \emph{delta differentiable}
at $t\in\mathbb{T}^{\kappa}$ if there exists a number
$f^{\Delta}(t)$ such that for all $\varepsilon>0$ there is a
neighborhood $U$ of $t$ (\textrm{i.e.},
$U=(t-\delta,t+\delta)\cap\mathbb{T}$ for some $\delta>0$) such that
$$|f(\sigma(t))-f(s)-f^{\Delta}(t)(\sigma(t)-s)|
\leq\varepsilon|\sigma(t)-s|,\mbox{ for all $s\in U$}.$$ We call
$f^{\Delta}(t)$ the \emph{delta derivative} of $f$ at $t$ and $f$ is
said \emph{delta differentiable} on $\mathbb{T}^{\kappa}$ provided
$f^{\Delta}(t)$ exists for all $t\in\mathbb{T}^{\kappa}$.
\end{definition}

\begin{remark}
If $t \in \mathbb{T} \setminus \mathbb{T}^\kappa$, then
$f^{\Delta}(t)$ is not uniquely defined, since for such a point $t$,
small neighborhoods $U$ of $t$ consist only of $t$ and, besides, we
have $\sigma(t) = t$. For this reason, maximal left-scattered points
are omitted in Definition~\ref{def:de:dif}.
\end{remark}

Note that in right-dense points $f^{\Delta} (t)=lim_{s\rightarrow
t}\frac{f(t)-f(s)}{t-s}$, provided this limit exists, and in
right-scattered points $f^{\Delta}
(t)=\frac{f(\sigma(t))-f(t)}{\mu(t)}$, provided $f$ is continuous at
$t$.

\begin{example}
\label{ex:der:QC} If $\mathbb{T} = \mathbb{R}$, then $f^{\Delta}(t)
= f'(t)$, \textrm{i.e.}, the delta derivative coincides with the
usual one. If $\mathbb{T} = \mathbb{Z}$, then $f^{\Delta}(t) =
\Delta f(t) = f(t+1) - f(t)$. If $\mathbb{T} = q^{\mathbb{N}_0}$,
$q>1$, then $f^{\Delta} (t)=\frac{f(q t)-f(t)}{(q-1) t}$,
\textrm{i.e.}, we get the usual derivative of quantum calculus
\cite{QC}.
\end{example}

A function $f:\mathbb{T}\rightarrow\mathbb{R}$ is called
\emph{rd-continuous} if it is continuous at right-dense points and
if its left-sided limit exists at left-dense points. We denote the
set of all rd-continuous functions by C$_{\textrm{rd}}$ and the set
of all delta differentiable functions with rd-continuous derivative
by C$_{\textrm{rd}}^1$.

Now we introduce the concept of integral for a function
$f:\mathbb{T}\rightarrow\mathbb{R}$.

Let $a,b\in\mathbb{T}$ with $a\leq b$. We define the closed interval
$[a,b]$ in $\mathbb{T}$ by
\begin{equation*}
[a,b]:=\{t \in \mathbb{T}: a \leq t \leq b\}.
\end{equation*}
Open intervals and half-open intervals in $\mathbb{T}$ are defined
accordingly. In what follows all intervals will be time scale
intervals.

It is known that rd-continuous function possess an
\emph{antiderivative}, \textrm{i.e.}, there exists a function $F$
with $F^{\Delta}=f$, and in this case the \emph{delta integral} is
defined by
\begin{equation*}
\int_{a}^{b}f(t)\Delta t=F(b)-F(a)
\end{equation*}
for all $a,b\in\mathbb{T}$.

The delta integral has the following properties:
\begin{itemize}

\item[(i)] if $f\in C_{rd}$ and $t \in \mathbb{T}^{\kappa}$, then
\begin{equation*}
\int_t^{\sigma(t)}f(\tau)\Delta\tau=\mu(t)f(t)\, ;
\end{equation*}

\item[(ii)]if $a,b\in\mathbb{T}$ and $f,g\in C_{rd}$, then

\begin{equation*}
\int_{a}^{b}f(\sigma(t))g^{\Delta}(t)\Delta t
=\left[(fg)(t)\right]_{t=a}^{t=b}-\int_{a}^{b}f^{\Delta}(t)g(t)\Delta
t \, ,
\end{equation*}

\begin{equation*}
\int_{a}^{b}f(t)g^{\Delta}(t)\Delta t
=\left[(fg)(t)\right]_{t=a}^{t=b}-\int_{a}^{b}
f^{\Delta}(t)g(\sigma(t))\Delta t;
\end{equation*}

\item[(iii)] if $[a,b]$ consists of only isolated points, then

\begin{equation*}
\int_{a}^{b}f(t)\Delta t =\sum_{t\in[a,b)}\mu(t)f(t).
\end{equation*}
\end{itemize}

\begin{example}
Let $a, b \in \mathbb{T}$ with $a < b$. If $\mathbb{T} =
\mathbb{R}$, then $\int_{a}^{b}f(t)\Delta t = \int_{a}^{b}f(t) dt$,
where the integral on the right-hand side is the classical Riemann
integral. If $\mathbb{T} = \mathbb{Z}$, then $\int_{a}^{b}f(t)\Delta
t = \sum_{k=a}^{b-1} f(k)$. If $\mathbb{T} = q^{\mathbb{N}_0}$,
$q>1$, then $\int_{a}^{b}f(t)\Delta t = (1 - q) \sum_{t \in [a,b)} t
f(t)$.
\end{example}

The Dubois-Reymond lemma of the calculus of variations on time
scales will be useful for our purposes.

\begin{lemma}(Lemma of Dubois-Reymond \cite{b7})
\label{lemma:DR} Let $\mathbb{T}=[a,b]$ be a time scale with at
least three points and let $g\in C_{\textrm{rd}}$,
$g:\mathbb{T}^\kappa\rightarrow\mathbb{R}$. Then,
$$\int_{a}^{b}g(t) \cdot \eta^\Delta(t)\Delta t=0  \quad
\mbox{for all $\eta\in C_{\textrm{rd}}^1$ with
$\eta(a)=\eta(b)=0$}$$ if and only if $g(t)=c$ on
$\mathbb{T}^\kappa$ for some $c\in\mathbb{R}$.
\end{lemma}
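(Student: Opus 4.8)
The plan is to prove the two implications separately, with the ``if'' direction being essentially immediate and the ``only if'' direction carrying the real content. First I would handle sufficiency: if $g(t)=c$ on $\mathbb{T}^\kappa$, then for any $\eta\in C^1_{\textrm{rd}}$ with $\eta(a)=\eta(b)=0$ we have $\int_a^b g(t)\eta^\Delta(t)\Delta t = c\int_a^b \eta^\Delta(t)\Delta t = c\bigl(\eta(b)-\eta(a)\bigr)=0$, using the fundamental theorem of calculus on time scales (the defining property of the delta integral of $\eta^\Delta$). So the burden is entirely on necessity.

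For necessity, the standard trick is to pick the constant $c$ forced on us by the test functions and then build an admissible $\eta$ whose derivative exposes $g-c$. Concretely, I would set
\[
c := \frac{1}{b-a}\int_a^b g(\tau)\,\Delta\tau ,
\]
and then define $\eta(t):=\int_a^t \bigl(g(\tau)-c\bigr)\,\Delta\tau$. One checks $\eta\in C^1_{\textrm{rd}}$ with $\eta^\Delta(t)=g(t)-c$ on $\mathbb{T}^\kappa$ (since $g-c\in C_{\textrm{rd}}$ has an antiderivative), and $\eta(a)=0$ while $\eta(b)=\int_a^b g\,\Delta\tau - c(b-a)=0$ by the choice of $c$; hence $\eta$ is admissible. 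Now apply the hypothesis with this $\eta$, and separately note $\int_a^b c\cdot\eta^\Delta(t)\,\Delta t = c(\eta(b)-\eta(a))=0$; subtracting,
\[
0=\int_a^b \bigl(g(t)-c\bigr)\eta^\Delta(t)\,\Delta t
 =\int_a^b \bigl(g(t)-c\bigr)^2\,\Delta t .
\]

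The final step is to deduce from $\int_a^b (g(t)-c)^2\,\Delta t=0$ that $g(t)=c$ for all $t\in\mathbb{T}^\kappa$. This is where I expect the only genuine subtlety to lie, and it is exactly why the hypothesis ``at least three points'' appears: on an isolated point $t$ the integral contributes $\mu(t)(g(t)-c)^2$, forcing $g(t)=c$ there, while at a right-dense point one uses rd-continuity of $g$ (continuity at right-dense points) together with the nonnegativity of the integrand in the usual way — if $(g(t_0)-c)^2>0$ at a right-dense $t_0$, it stays bounded below on a neighborhood, making the integral strictly positive. One must be a little careful that $\mathbb{T}^\kappa$ could differ from $\mathbb{T}$ only by deleting the single point before $b$ when $b$ is left-scattered, and that the two- or one-point degenerate cases are excluded precisely so that enough nonconstant admissible $\eta$ exist; invoking the ``at least three points'' assumption cleanly sidesteps these pathologies. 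I would close by remarking that $g=c$ on $\mathbb{T}^\kappa$ is exactly the claimed conclusion.
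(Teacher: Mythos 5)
The paper states this lemma without proof, citing \cite{b7}; your argument is the standard proof of the Dubois--Reymond lemma on time scales (take $c$ to be the mean value of $g$, test against $\eta(t)=\int_a^t(g(\tau)-c)\Delta\tau$, and conclude from $\int_a^b(g(t)-c)^2\Delta t=0$ using nonnegativity at right-scattered points and rd-continuity at right-dense points), and it is correct. The one point worth making explicit is $t=b$ in the case where $b$ is left-dense, so that $b\in\mathbb{T}^\kappa$ yet the delta integral does not see the value $g(b)$; there one notes that $b$ is right-dense, so rd-continuity gives $g(b)=\lim_{s\to b^-}g(s)=c$, which your continuity remark covers.
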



\section{Euler-Lagrange equations}
\label{sec:Euler}

Let $\mathbb{T}$ be a time scale. Throughout we let $A,B\in
\mathbb{T}$ with $A<B$. For an interval $[c,d]\cap \mathbb{T}$ we
simply write $[c,d]$. We also abbreviate $f\circ\sigma$ by
$f^\sigma$. Now let $[a,b]$, with $a,b\in \mathbb{T}$ and $b<B$, be
a subinterval of $[A,B]$.

The general (non-classical) problem of the calculus of variations on
time scales under our consideration consists of minimizing or
maximizing a functional of the form
\begin{equation}
\label{vp}
\begin{gathered}
 \mathcal{L}[x]=H\left(\int_{a}^{b}f_{1}(t,x^{\sigma}(t),x^{\Delta}(t))\Delta
     t,\ldots, \int_{a}^{b}f_{n}(t,x^{\sigma}(t),x^{\Delta}(t))\Delta
     t\right),\\
     (x(a)=x_{a}), \quad (x(b)=x_{b})
 \end{gathered}
\end{equation}
over all $x\in C_{rd}^{1}$. Using parentheses around the end-point
conditions means that these conditions may or may not be present. We
assume that:
\begin{itemize}

\item[(i)] the function $H:\mathbb{R}^{n}\rightarrow \mathbb{R}$ has continuous
partial derivatives with respect to its arguments and we denote them
by $H'_{i}$, $i=1,\ldots,n$;

\item[(ii)] functions $(t,y,v)\rightarrow f_{i}(t,y,v)$ from $[a,b]\times \mathbb{R}^{2}$ to
$\mathbb{R}$, $i=1,\ldots,n$, have partial continuous derivatives with
respect to $y,v$ for all $t\in[a,b]$ and we denote them by $f_{iy}$,
$f_{iv}$;

\item [(iii)] $f_{i}$, $f_{iy}$,
$f_{iv}$, $i=1,\ldots,n$, are rd-continuous in $t$ for all $x\in
C_{rd}^{1}$.

\end{itemize}

A function $x\in C_{rd}^{1}$ is said to be an admissible function
provided that it satisfies the end-points conditions (if any is
given).

Let us consider the following norm in $C_{rd}^{1}$:
\begin{equation*}
   \|x\|_{1}=\sup_{t\in[a,b]}|x^{\sigma}(t)|+\sup_{t\in[a,b]}|x^{\Delta}(t)|.
\end{equation*}

\begin{definition}
An admissible function $\tilde{x}$ is said to be a \emph{weak local
minimizer} (respectively \emph{weak local maximizer}) for \eqref{vp}
if there exists $\delta >0$ such that $\mathcal{L}[\tilde{x}]
\leq \mathcal{L}[x]$ (respectively $\mathcal{L}[\tilde{x}] \geq \mathcal{L}[x]$)
for all admissible $x$ with $\|x-\tilde{x}\|_{1}<\delta$.
\end{definition}

Next theorem gives necessary optimality conditions for problem
\eqref{vp}.

\begin{theorem}
\label{thm:mr}
If $\tilde{x}$ is a weak local solution of the problem \eqref{vp},
then the Euler-Lagrange equation
\begin{equation}
\label{Euler}
\sum_{i=1}^{n}H'_{i}(\mathcal{F}_{1}[\tilde{x}],\ldots,
\mathcal{F}_{n}[\tilde{x}])\left(f_{iv}^{\Delta}(t,\tilde{x}^{\sigma}(t),\tilde{x}^{\Delta}(t))
-f_{iy}(t,\tilde{x}^{\sigma}(t),\tilde{x}^{\Delta}(t))\right)=0
\end{equation}
holds for all $t \in [a,b]^\kappa$, where
$\mathcal{F}_{i}[\tilde{x}]
=\int_{a}^{b}f_{i}(t,\tilde{x}^{\sigma}(t),\tilde{x}^{\Delta}(t))\Delta t$,
$i=1,\ldots,n$. Moreover, if $x(a)$ is not specified, then
\begin{equation}
\label{nat:l}
\sum_{i=1}^{n}H'_{i}(\mathcal{F}_{1}[\tilde{x}],\ldots,
\mathcal{F}_{n}[\tilde{x}])f_{iv}(a,\tilde{x}^{\sigma}(a),\tilde{x}^{\Delta}(a))=0;
\end{equation}
and if $x(b)$ is not specified, then
\begin{multline}
\label{nat:r}
\sum_{i=1}^{n}H'_{i}(\mathcal{F}_{1}[\tilde{x}],
\ldots,\mathcal{F}_{n}[\tilde{x}])\Bigl(f_{iv}(\rho(b),\tilde{x}^{\sigma}(\rho
(b)),\tilde{x}^{\Delta}(\rho(b)))\Bigr.\\
+\Bigl.\int_{\rho(b)}^bf_{iy}(t,\tilde{x}^{\sigma}(t),\tilde{x}^{\Delta}(t))\Delta
t\Bigr)=0.
\end{multline}
\end{theorem}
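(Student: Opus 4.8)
The plan is to use the standard variational technique: embed $\tilde{x}$ in a one-parameter family of admissible variations and differentiate the composite functional at the parameter value zero. Concretely, for an arbitrary $\eta \in C_{rd}^1$ satisfying $\eta(a)=\eta(b)=0$ whenever the corresponding endpoint is fixed, set $x = \tilde{x} + \varepsilon\eta$ and consider $\phi(\varepsilon) = \mathcal{L}[\tilde{x}+\varepsilon\eta] = H(\mathcal{F}_1[\tilde{x}+\varepsilon\eta],\ldots,\mathcal{F}_n[\tilde{x}+\varepsilon\eta])$. Since $\tilde{x}$ is a weak local extremizer and $\|x-\tilde{x}\|_1 = |\varepsilon|\,\|\eta\|_1$, the function $\phi$ has a local extremum at $\varepsilon = 0$, so $\phi'(0)=0$. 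The first step is therefore to justify differentiation under the delta integral sign — using the continuity hypotheses (ii)--(iii) on $f_i$, $f_{iy}$, $f_{iv}$ together with the chain rule for $H$ (hypothesis (i)) — to obtain
\begin{equation*}
\phi'(0) = \sum_{i=1}^{n} H'_i(\mathcal{F}_1[\tilde{x}],\ldots,\mathcal{F}_n[\tilde{x}]) \int_a^b \Bigl( f_{iy}(t,\tilde{x}^\sigma,\tilde{x}^\Delta)\,\eta^\sigma(t) + f_{iv}(t,\tilde{x}^\sigma,\tilde{x}^\Delta)\,\eta^\Delta(t) \Bigr)\Delta t = 0.
\end{equation*}

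Next I would handle the $\eta^\sigma$ term. Introduce the antiderivative-type substitution: since $f_{iy}(t,\tilde{x}^\sigma,\tilde{x}^\Delta)$ is rd-continuous, it has an antiderivative, and I can integrate by parts using the formula $\int_a^b f(\sigma(t)) g^\Delta(t)\Delta t = [(fg)(t)]_a^b - \int_a^b f^\Delta(t) g(t)\Delta t$ from property (ii) of the delta integral. Write $\int_a^b f_{iy}\,\eta^\sigma \,\Delta t$ in terms of an antiderivative $A_i(t) = \int_{\rho(b)}^t f_{iy}\,\Delta\tau$ (or $\int_a^t$), so that after integration by parts the whole integrand becomes a multiple of $\eta^\Delta$, up to boundary terms that vanish when $\eta(a)=\eta(b)=0$. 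Combining, one arrives at $\int_a^b \psi(t)\,\eta^\Delta(t)\,\Delta t = 0$ for all admissible $\eta$ with $\eta(a)=\eta(b)=0$, where
\begin{equation*}
\psi(t) = \sum_{i=1}^n H'_i\bigl(\mathcal{F}_1[\tilde{x}],\ldots,\mathcal{F}_n[\tilde{x}]\bigr)\Bigl( f_{iv}(t,\tilde{x}^\sigma,\tilde{x}^\Delta) - \int_{t}^{b} f_{iy}(\tau,\tilde{x}^\sigma,\tilde{x}^\Delta)\,\Delta\tau \Bigr)
\end{equation*}
or an analogous expression. By the Dubois-Reymond lemma (Lemma~\ref{lemma:DR}), $\psi(t) = c$ on $[a,b]^\kappa$ for some constant $c$. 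Delta-differentiating this identity — noting that $t \mapsto \int_t^b f_{iy}\,\Delta\tau$ has delta derivative $-f_{iy}(t,\tilde{x}^\sigma,\tilde{x}^\Delta)$ and that the coefficients $H'_i(\cdots)$ are constants — yields exactly the Euler--Lagrange equation \eqref{Euler}.

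For the natural boundary conditions, I would repeat the argument but now allow $\eta$ with $\eta(a)\neq 0$ (when $x(a)$ is free) or $\eta(b)\neq 0$ (when $x(b)$ is free), keeping the boundary terms that arose in the integration by parts. Since \eqref{Euler} already holds, the integral term drops out and what remains is a boundary expression that must vanish for arbitrary $\eta(a)$ or $\eta(b)$; evaluating the surviving antiderivative and boundary contributions at the free endpoint gives \eqref{nat:l} and \eqref{nat:r} respectively — the asymmetry between the two (the extra integral $\int_{\rho(b)}^b f_{iy}\,\Delta t$ in \eqref{nat:r}) being a consequence of the $\sigma$ appearing in $\eta^\sigma$, which shifts the relevant boundary evaluation from $b$ to $\rho(b)$. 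The main obstacle, I expect, is the careful bookkeeping in the integration by parts: choosing the antiderivative's base point correctly so that the boundary terms come out in the precise form stated, and tracking how the forward shift $\sigma$ interacts with endpoint evaluations (this is the source of the $\rho(b)$ and the residual integral in \eqref{nat:r}, which have no counterpart in the classical $\mathbb{T}=\mathbb{R}$ case). The differentiation-under-the-integral step also requires a brief justification via the mean value theorem and the rd-continuity/continuity assumptions, but that is routine.
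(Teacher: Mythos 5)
Your strategy is the paper's own: set $\phi(\varepsilon)=\mathcal{L}[\tilde x+\varepsilon\eta]$, compute $\phi'(0)$ by the chain rule, eliminate the $\eta^{\sigma}$ term by integrating by parts against an antiderivative of $t\mapsto f_{iy}(t,\tilde x^{\sigma}(t),\tilde x^{\Delta}(t))$, apply the Dubois--Reymond lemma to the resulting identity $\int_a^b\psi(t)\,\eta^{\Delta}(t)\,\Delta t=0$, and delta-differentiate the constant identity $\psi\equiv c$; the natural boundary conditions come from the retained boundary terms. So the route is the same. Two points, however, need repair before the sketch becomes a proof.

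First, a sign error: if you anchor the antiderivative at $b$, the Dubois--Reymond quantity is $f_{iv}+\int_t^b f_{iy}\,\Delta\tau$ (equivalently the paper's $f_{iv}-\int_a^t f_{iy}\,\Delta\tau$, which differs from it only by the constant $\int_a^b f_{iy}\,\Delta\tau$), not $f_{iv}-\int_t^b f_{iy}\,\Delta\tau$ as displayed. As written, your $\psi$ delta-differentiates to $\sum_{i}H'_{i}\bigl(f_{iv}^{\Delta}+f_{iy}\bigr)=0$, which is not \eqref{Euler}. Second, the derivation of \eqref{nat:r} --- the only genuinely time-scale-specific step --- is left as a heuristic, and the heuristic offered (``the $\sigma$ in $\eta^{\sigma}$ shifts the boundary evaluation to $\rho(b)$'') is not the actual mechanism. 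What happens is this: after substituting $\psi\equiv c$ into the first variation, the coefficient of $\eta(b)$ is $c+\sum_i H'_{i}\int_a^b f_{iy}\,\Delta t$, and since the identity $\sum_i H'_{i}\bigl(f_{iv}-\int_a^t f_{iy}\,\Delta\tau\bigr)=c$ is only guaranteed on $[a,b]^{\kappa}$, the constant $c$ must be evaluated at $t=\rho(b)$; the residual integral $\int_{\rho(b)}^b f_{iy}\,\Delta t$ in \eqref{nat:r} is exactly the difference $\int_a^b-\int_a^{\rho(b)}$. (At the left end the same substitution at $t=a$ makes the antiderivative vanish, which is why \eqref{nat:l} carries no residual term.) With these two repairs your argument coincides with the paper's proof.
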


\begin{proof}
Suppose that $\mathcal{L}[x]$ has a weak local extremum at
$\tilde{x}$. For an admissible variation $h\in C_{rd}^{1}$ we define
a function $\phi:\mathbb{R}\rightarrow \mathbb{R}$ by
$\phi(\varepsilon) = \mathcal{L}[(\tilde{x} + \varepsilon h)] $. We
do not require $h(a)=0$ or $h(b)=0$ in case $x(a)$ or $x(b)$,
respectively, is free (it is possible that both are free). A
necessary condition for $\tilde{x}$ to be an extremizer for
$\mathcal{L}[x]$ is given by $\phi'(\varepsilon)|_{\varepsilon=0} =
0$. Using the chain rule for obtaining the derivative of a composed
function we get
\begin{equation*}
\phi'(\varepsilon)|_{\varepsilon=0}
=\sum_{i=1}^{n}H'_{i}(\mathcal{F}_{1}[\tilde{x}],\ldots,\mathcal{F}_{n}[\tilde{x}])
\int_a^b \left[ f_{iy}(\bullet) h^{\sigma}(t) + f_{iv}(\bullet)
h^{\Delta}(t)\right]\Delta t ,
\end{equation*}
where $(\bullet) =
\left(t,\tilde{x}^{\sigma}(t),\tilde{x}^{\Delta}(t)\right)$.
Integration by parts of the first term of the integrand gives
\begin{equation*}
\int_a^b f_{iy}(\bullet) h^{\sigma}(t) \Delta t =\int_a^t
f_{iy}(\circ) \Delta \tau h(t)|_{t=a}^{t=b}-\int_a^b \left(\int_a^t
f_{iy}(\circ)\Delta \tau h^{\Delta}(t)\right) \Delta t,
\end{equation*}
where $(\circ) =
\left(\tau,\tilde{x}^{\sigma}(\tau),\tilde{x}^{\Delta}(\tau)\right)$.The
necessary condition $\phi'(\varepsilon)|_{\varepsilon=0} = 0$ can be
written as
\begin{multline}
\label{eq:aft:IP} 0 = \int_a^b
h^{\Delta}(t)\sum_{i=1}^{n}H'_{i}(\mathcal{F}_{1}[\tilde{x}],\ldots,\mathcal{F}_{n}[\tilde{x}])
\left(f_{iv}(\bullet)-\int_a^t
f_{iy}(\circ)\Delta \tau \right)\Delta t \\
\left.+\sum_{i=1}^{n}H'_{i}(\mathcal{F}_{1}[\tilde{x}],\ldots,\mathcal{F}_{n}[\tilde{x}])
\left( \int_a^t f_{iy}(\circ)\Delta \tau
h(t)\right)\right|_{t=a}^{t=b}.
\end{multline}
 In particular, equation \eqref{eq:aft:IP} holds for all variations which are zero at both ends.
 For all such $h$'s the second term in \eqref{eq:aft:IP} is zero and by the
Dubois-Reymond Lemma~\ref{lemma:DR}, we have
\begin{equation}\label{eq:EL}
\sum_{i=1}^{n}H'_{i}(\mathcal{F}_{1}[\tilde{x}],\ldots,\mathcal{F}_{n}[\tilde{x}])
\left(f_{iv}(\bullet)-\int_a^t f_{iy}(\circ)\Delta \tau
\right)\Delta t=c,
\end{equation}
for some $c\in \mathbb{R}$ and all $t\in[a,b]$. Hence, equation
\eqref{Euler} holds for all $t \in [a,b]^\kappa$. Equation
\eqref{eq:aft:IP} must be satisfied for all admissible values of
$h(a)$ and $h(b)$. Consequently, equations \eqref{eq:aft:IP} and
\eqref{eq:EL} imply that
\begin{multline*}
0=\left(c+\sum_{i=1}^{n}H'_{i}(\mathcal{F}_{1}[\tilde{x}],\ldots,\mathcal{F}_{n}[\tilde{x}])\int_a^b
f_{iy}(\bullet)\Delta t\right)h(b)\\
-\left(c+\sum_{i=1}^{n}H'_{i}(\mathcal{F}_{1}[\tilde{x}],\ldots,\mathcal{F}_{n}[\tilde{x}])\int_a^a
f_{iy}(\bullet)\Delta t\right)h(a).
\end{multline*}
From the properties of the delta integral and from \eqref{eq:EL}, it
follows that
\begin{multline}
\label{eq:1}
0=h(b)\left\{\sum_{i=1}^{n}H'_{i}(\mathcal{F}_{1}[\tilde{x}],\ldots,
\mathcal{F}_{n}[\tilde{x}])\left(f_{iv}(\rho(b),\tilde{x}^{\sigma}(\rho
(b)),\tilde{x}^{\Delta}(\rho(b))\right.\right.\\
+\left.\left.\int_{\rho(b)}^bf_{iy}(t,\tilde{x}^{\sigma}(t),\tilde{x}^{\Delta}(t))\Delta
t\right)\right\}
-h(a)\left(\sum_{i=1}^{n}H'_{i}(\mathcal{F}_{1}[\tilde{x}],\ldots,
\mathcal{F}_{n}[\tilde{x}])f_{iv}(a,\tilde{x}^{\sigma}(a),\tilde{x}^{\Delta}(a))\right).
\end{multline}
If $x(t)$ is not preassigned at either end-point, then $h(a)$ and
$h(b)$ are both completely arbitrary and we conclude that their
coefficients in \eqref{eq:1} must each vanish. It follows that
condition \eqref{nat:l} holds when $x(a)$ is not given, and
condition \eqref{nat:r} holds when $x(b)$ is not given.
\end{proof}

\begin{remark}
\label{rem:mr:reg} Let $\mathbb{T}$ be a regular time scale. Then
from the properties of the delta integral we have
\begin{equation*}
\int_{\rho(b)}^b
f_{iy}(t,\tilde{x}^{\sigma}(t),\tilde{x}^{\Delta}(t))\Delta t=\mu
(\rho
(t))f_{iy}(\rho(b),\tilde{x}^{\sigma}(\rho(b)),\tilde{x}^{\Delta}(\rho(b))).
\end{equation*}
Therefore \eqref{nat:r} can be written in the form
\begin{equation*}
\begin{split}
\sum_{i=1}^{n}H'_{i}(\mathcal{F}_{1}[\tilde{x}],\ldots,
\mathcal{F}_{n}[\tilde{x}])\left\{f_{iv}(\rho(b),\tilde{x}^{\sigma}(\rho
(b)),\tilde{x}^{\Delta}(\rho(b)))\right.\\
+\left.\mu (\rho
(t))f_{iy}(\rho(b),\tilde{x}^{\sigma}(\rho(b)),\tilde{x}^{\Delta}(\rho(b)))\right\}.
\end{split}
\end{equation*}
\end{remark}

Choosing $\mathbb{T}=\mathbb{R}$ in Theorem~\ref{thm:mr} we
immediately obtain Theorem~3.1 and Equation~(4.1) in \cite{CLP}. The
Euler-Lagrange Equation for the product functional can be deduced
from Theorem~\ref{thm:mr}.

\begin{corollary}\label{cproduct}
If $\tilde{x}$ is a solution of the problem
\begin{equation*}
\begin{gathered}
\mathcal{L}[x]=\left(\int_{a}^{b}f_{1}(t,x^{\sigma}(t),x^{\Delta}(t))\Delta
     t\right)\left( \int_{a}^{b}f_{2}(t,x^{\sigma}(t),x^{\Delta}(t))\Delta
     t\right),\\
     (x(a)=x_{a}), \quad (x(b)=x_{b}),
\end{gathered}
\end{equation*}
then the Euler-Lagrange equation
\begin{multline*}
\mathcal{F}_{2}[\tilde{x}]\left(f_{1v}^{\Delta}(t,\tilde{x}^{\sigma}(t),\tilde{x}^{\Delta}(t))-
f_{1y}(t,\tilde{x}^{\sigma}(t),\tilde{x}^{\Delta}(t))\right)\\
+\mathcal{F}_{1}[\tilde{x}]\left(f_{2v}^{\Delta}(t,\tilde{x}^{\sigma}(t),\tilde{x}^{\Delta}(t))-
f_{2y}(t,\tilde{x}^{\sigma}(t),\tilde{x}^{\Delta}(t))\right)=0
\end{multline*}
holds for all $t \in [a,b]^\kappa$. Moreover, if $x(a)$ is not
specified, then
\begin{equation*}
\mathcal{F}_{2}[\tilde{x}]f_{1v}(a,\tilde{x}^{\sigma}(a),\tilde{x}^{\Delta}(a))
+\mathcal{F}_{1}[\tilde{x}]f_{2v}(a,\tilde{x}^{\sigma}(a),\tilde{x}^{\Delta}(a))=0;
\end{equation*}
if $x(b)$ is not specified, then
\begin{multline*}
\mathcal{F}_{2}[\tilde{x}]\left(f_{1v}(\rho(b),\tilde{x}^{\sigma}(\rho
(b)),\tilde{x}^{\Delta}(\rho(b)))+\int_{\rho(b)}^bf_{1y}(t,\tilde{x}^{\sigma}(t),\tilde{x}^{\Delta}(t))\Delta
t\right)\\
+\mathcal{F}_{1}[\tilde{x}]\left(f_{2v}(\rho(b),\tilde{x}^{\sigma}(\rho
(b)),\tilde{x}^{\Delta}(\rho(b)))+\int_{\rho(b)}^bf_{2y}(t,\tilde{x}^{\sigma}(t),\tilde{x}^{\Delta}(t))\Delta
t\right)=0.
\end{multline*}
\end{corollary}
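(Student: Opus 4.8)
The plan is to obtain Corollary~\ref{cproduct} as a direct specialization of Theorem~\ref{thm:mr}. First I would set $n=2$ and take $H:\mathbb{R}^{2}\to\mathbb{R}$ to be the multiplication map $H(u_{1},u_{2})=u_{1}u_{2}$. This $H$ clearly satisfies hypothesis (i) of Theorem~\ref{thm:mr}, since its partial derivatives are $H'_{1}(u_{1},u_{2})=u_{2}$ and $H'_{2}(u_{1},u_{2})=u_{1}$, which are continuous. With this choice the functional $\mathcal{L}[x]=H\left(\mathcal{F}_{1}[x],\mathcal{F}_{2}[x]\right)$ reduces exactly to the product functional in the statement, and the hypotheses (ii)--(iii) on $f_{1},f_{2}$ carry over verbatim.

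Next I would simply substitute $H'_{1}(\mathcal{F}_{1}[\tilde{x}],\mathcal{F}_{2}[\tilde{x}])=\mathcal{F}_{2}[\tilde{x}]$ and $H'_{2}(\mathcal{F}_{1}[\tilde{x}],\mathcal{F}_{2}[\tilde{x}])=\mathcal{F}_{1}[\tilde{x}]$ into each of the three conclusions \eqref{Euler}, \eqref{nat:l}, \eqref{nat:r} of Theorem~\ref{thm:mr}. Expanding the sum over $i\in\{1,2\}$ in \eqref{Euler} gives the displayed Euler--Lagrange equation of the corollary; doing the same in \eqref{nat:l} yields the natural boundary condition at $a$; and doing the same in \eqref{nat:r} yields the natural boundary condition at $b$. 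Each of these is a mechanical substitution, so no genuine difficulty arises.

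There is essentially no main obstacle here — the only point worth a sentence of care is to note that $\mathcal{F}_{1}[\tilde{x}]$ and $\mathcal{F}_{2}[\tilde{x}]$ are constants (real numbers, once $\tilde{x}$ is fixed), so that $\mathcal{F}_{2}[\tilde{x}]$ may be treated as a constant coefficient when one applies the delta derivative $\tfrac{\Delta}{\Delta t}$ to $f_{1v}^{\Delta}$ inside \eqref{Euler}, and likewise for $\mathcal{F}_{1}[\tilde{x}]$. This is exactly why the corollary's Euler--Lagrange equation has the clean product form with the multipliers outside the derivatives. I would close the proof by simply remarking that the three displayed equations are precisely \eqref{Euler}, \eqref{nat:l} and \eqref{nat:r} of Theorem~\ref{thm:mr} written out for $n=2$ and $H(u_{1},u_{2})=u_{1}u_{2}$.
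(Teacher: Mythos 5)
Your proposal is correct and coincides with the paper's own (implicitly indicated) argument: the paper simply states that Corollary~\ref{cproduct} "can be deduced from Theorem~\ref{thm:mr}", and the intended deduction is precisely your specialization $n=2$, $H(u_{1},u_{2})=u_{1}u_{2}$, $H'_{1}=u_{2}$, $H'_{2}=u_{1}$, followed by substitution into \eqref{Euler}, \eqref{nat:l} and \eqref{nat:r}. The only superfluous point is your remark about the $\mathcal{F}_{i}[\tilde{x}]$ being constants under delta differentiation: in \eqref{Euler} these coefficients already sit outside $f_{iv}^{\Delta}$, so the substitution is literally mechanical.
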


\begin{remark}
In the particular case $\mathbb{T}=\mathbb{R}$,
Corollary~\ref{cproduct} gives Equation~(3.17) in \cite{CLP}.
\end{remark}

\begin{example}
Consider the problem
\begin{equation}\label{ex:product}
\begin{gathered}
\text{minimize} \quad
\mathcal{L}[x]=\left(\int_{0}^{1}(x^{\Delta}(t))^2\Delta
     t\right)\left(\int_{0}^{1}tx^{\Delta}(t) \Delta
     t\right)\\
     x(0)=0, \quad x(1)=1.
 \end{gathered}
\end{equation}
If $\tilde{x}$ is a local minimum of \eqref{ex:product}, then the
Euler-Lagrange equation must hold, i.e.,
\begin{equation}\label{ex:product:euler}
2\tilde{x}^{\Delta \Delta}(t)Q_{2}+Q_{1}=0,
\end{equation}
where
\begin{equation*}
Q_{1}=\mathcal{F}_{1}[\tilde{x}]=\int_{0}^{1}(\tilde{x}^{\Delta}(t))^2\Delta
t, \quad
Q_{2}=\mathcal{F}_{2}[\tilde{x}]=\int_{0}^{1}t\tilde{x}^{\Delta}(t)
\Delta t.
\end{equation*}
If  $Q_{2}= 0$, then also $Q_{1}=0$. This contradicts the fact that
on any time scale a global minimizer for the problem
\begin{equation*}
\begin{gathered}
\text{minimize} \quad
\mathcal{F}_{1}[x]=\int_{0}^{1}(x^{\Delta}(t))^2\Delta
     t\\
     x(0)=0, \quad x(1)=1
\end{gathered}
\end{equation*}
is $\bar{x}(t)=t$ and $\mathcal{F}_{1}[\bar{x}]=1$. Hence,
$Q_{2}\neq 0$ and equation \eqref{ex:product:euler} implies that
candidate solutions for problem \eqref{ex:product} are those
satisfying the delta differential equation
\begin{equation}\label{euler}
\tilde{x}^{\Delta \Delta}(t)=-\frac{Q_1}{2Q_2}
\end{equation}
subject to boundary conditions $x(0)=0$ and $x(1)=1$. Solving
equation \eqref{euler} we obtain
\begin{equation*}
x(t)=-\frac{Q_1}{2Q_2}\int_{0}^{t}\tau\Delta
\tau+1+\frac{Q_1}{2Q_2}\int_{0}^{1}\tau\Delta \tau.
\end{equation*}
Therefore, a solution of \eqref{euler} depends on the time scale.
Let us consider, for example, $\mathbb{T}=\mathbb{R}$ and
$\mathbb{T}=\left\{0,\frac{1}{2},1\right\}$. On
$\mathbb{T}=\mathbb{R}$ we obtain
\begin{equation}\label{sol:P}
x(t)=-\frac{Q_1}{4Q_2}t^2+\frac{4Q_2+Q_1}{4Q_2}t.
\end{equation}
Substituting \eqref{sol:P} into functionals $\mathcal{F}_1$ and
$\mathcal{F}_2$ gives
\begin{equation}\label{equation:Q1,Q2}
\begin{cases}
\frac{48Q_2^2+Q_1^2}{48Q_2^2}=Q_1\\
\frac{12Q_2-Q_1}{24Q_2}=Q_2.
\end{cases}
\end{equation}
Solving the system of equations \eqref{equation:Q1,Q2} we obtain
\begin{gather*}
\begin{cases}
Q_1=0\\
Q_2=0,
\end{cases}
\quad
\begin{cases}
Q_1=\frac{4}{3}\\
Q_2=\frac{1}{3}.
\end{cases}
\end{gather*}
Therefore,
\begin{equation*}
\tilde{x}(t)=-t^2+2t
\end{equation*}
is a candidate extremizer for problem \eqref{ex:product} on
$\mathbb{T}=\mathbb{R}$. Note that nothing can be concluded as to
whether $\tilde{x}$ gives a minimum, a maximum, or neither of these
for $\mathcal{L}$. \\
The solution of \eqref{euler} on
$\mathbb{T}=\left\{0,\frac{1}{2},1\right\}$ is
\begin{gather}\label{euler:T}
x(t)=
\begin{cases}
0 & \text{ if } t=0 \\
 \frac{1}{2}+\frac{Q_1}{16Q_2} & \text{ if } t=\frac{1}{2}\\
 1 & \text{ if } t=1.
\end{cases}
\end{gather}
Constants $Q_1$ and $Q_2$ are determined by substituting
\eqref{euler:T} into functionals $\mathcal{F}_1$ and
$\mathcal{F}_2$. The resulting
 system of equations is
\begin{equation}\label{equation:T}
    \begin{cases}
1+\frac{Q_1^2}{64Q_2^2}=Q_1\\
\frac{1}{4}-\frac{Q_1}{32Q_2}=Q_2.
\end{cases}
\end{equation}
Since system of equations \eqref{equation:T} has no real solutions,
we conclude that there exists no extremizer for problem
\eqref{ex:product} on $\mathbb{T}=\left\{0,\frac{1}{2},1\right\}$
among the set of functions that we consider to be admissible.
\end{example}

Assuming that the denominator does not vanish, the Euler-Lagrange
equation for the quotient problem can be deduced from
Theorem~\ref{thm:mr}.

\begin{corollary}
\label{cquotient}
If $\tilde{x}$ is a solution of the problem
\begin{equation*}
\begin{gathered}
\mathcal{L}[x]=\frac{\int_{a}^{b}f_{1}(t,x^{\sigma}(t),x^{\Delta}(t))\Delta
     t}{\int_{a}^{b}f_{2}(t,x^{\sigma}(t),x^{\Delta}(t))\Delta
     t},\\
     (x(a)=x_{a}), \quad (x(b)=x_{b}),
 \end{gathered}
\end{equation*}
then the Euler-Lagrange equation
\begin{multline*}
f_{1v}^{\Delta}(t,\tilde{x}^{\sigma}(t),\tilde{x}^{\Delta}(t))-
f_{1y}(t,\tilde{x}^{\sigma}(t),\tilde{x}^{\Delta}(t))\\
-Q\left(f_{2v}^{\Delta}(t,\tilde{x}^{\sigma}(t),\tilde{x}^{\Delta}(t))-
f_{2y}(t,\tilde{x}^{\sigma}(t),\tilde{x}^{\Delta}(t))\right)=0
\end{multline*}
holds for all $t \in [a,b]^\kappa$, where
$Q=\frac{\mathcal{F}_{1}[\tilde{x}]}{\mathcal{F}_{2}[\tilde{x}]}$.
Moreover, if $x(a)$ is not specified, then
\begin{equation*}
f_{1v}(a,\tilde{x}^{\sigma}(a),\tilde{x}^{\Delta}(a))
-Qf_{2v}(a,\tilde{x}^{\sigma}(a),\tilde{x}^{\Delta}(a))=0;
\end{equation*}
if $x(b)$ is not specified, then
\begin{multline*}
f_{1v}(\rho(b),\tilde{x}^{\sigma}(\rho
(b)),\tilde{x}^{\Delta}(\rho(b)))
+\int_{\rho(b)}^bf_{1y}(t,\tilde{x}^{\sigma}(t),\tilde{x}^{\Delta}(t))\Delta t\\
-Q\left(f_{2v}(\rho(b),\tilde{x}^{\sigma}(\rho
(b)),\tilde{x}^{\Delta}(\rho(b)))
+\int_{\rho(b)}^bf_{2y}(t,\tilde{x}^{\sigma}(t),\tilde{x}^{\Delta}(t))\Delta t\right)=0.
\end{multline*}
\end{corollary}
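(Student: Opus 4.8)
The plan is to recognize the quotient functional as a special instance of the composition functional \eqref{vp} and then read off the conclusion directly from Theorem~\ref{thm:mr}. Concretely, I would take $n=2$ and let $H:\mathbb{R}^2\to\mathbb{R}$ be $H(y_1,y_2)=y_1/y_2$, so that $\mathcal{L}[x]=H\bigl(\mathcal{F}_1[x],\mathcal{F}_2[x]\bigr)$ with $\mathcal{F}_i[x]=\int_a^b f_i(t,x^\sigma(t),x^\Delta(t))\Delta t$, $i=1,2$. The standing hypothesis that the denominator does not vanish means $\mathcal{F}_2[x]\neq 0$ for all admissible $x$ in a $\|\cdot\|_1$-neighbourhood of $\tilde{x}$; in particular the point $\bigl(\mathcal{F}_1[\tilde{x}],\mathcal{F}_2[\tilde{x}]\bigr)$ lies in the open set $\{(y_1,y_2):y_2\neq0\}$, on which $H$ has continuous partial derivatives, so assumption~(i) of Theorem~\ref{thm:mr} is satisfied and the theorem applies to $\tilde{x}$.

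Next I would compute $H'_1(y_1,y_2)=1/y_2$ and $H'_2(y_1,y_2)=-y_1/y_2^2$, whence $H'_1\bigl(\mathcal{F}_1[\tilde{x}],\mathcal{F}_2[\tilde{x}]\bigr)=1/\mathcal{F}_2[\tilde{x}]$ and $H'_2\bigl(\mathcal{F}_1[\tilde{x}],\mathcal{F}_2[\tilde{x}]\bigr)=-\mathcal{F}_1[\tilde{x}]/\mathcal{F}_2[\tilde{x}]^2$. Substituting these into the Euler-Lagrange equation \eqref{Euler} and writing $(\bullet)=(t,\tilde{x}^\sigma(t),\tilde{x}^\Delta(t))$ gives
\[
\frac{1}{\mathcal{F}_2[\tilde{x}]}\bigl(f_{1v}^\Delta(\bullet)-f_{1y}(\bullet)\bigr)
-\frac{\mathcal{F}_1[\tilde{x}]}{\mathcal{F}_2[\tilde{x}]^2}\bigl(f_{2v}^\Delta(\bullet)-f_{2y}(\bullet)\bigr)=0 ;
\]
multiplying through by the nonzero number $\mathcal{F}_2[\tilde{x}]$ and setting $Q=\mathcal{F}_1[\tilde{x}]/\mathcal{F}_2[\tilde{x}]$ produces precisely the asserted Euler-Lagrange equation on $[a,b]^\kappa$. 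The natural boundary conditions follow in exactly the same manner: inserting the values of $H'_1,H'_2$ into \eqref{nat:l} and clearing the factor $\mathcal{F}_2[\tilde{x}]$ yields $f_{1v}(a,\tilde{x}^\sigma(a),\tilde{x}^\Delta(a))-Q\,f_{2v}(a,\tilde{x}^\sigma(a),\tilde{x}^\Delta(a))=0$ when $x(a)$ is free, and doing the same with \eqref{nat:r} gives the stated transversality condition at $b$ when $x(b)$ is free.

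There is essentially no real obstacle here, since the whole argument is a specialization of Theorem~\ref{thm:mr}. The single point deserving care is the one already highlighted in the statement: one must know a priori that $\mathcal{F}_2$ is nonzero near $\tilde{x}$, both so that $\mathcal{L}$ is defined in a neighbourhood of $\tilde{x}$ and so that $H$ is of class $C^1$ at $\bigl(\mathcal{F}_1[\tilde{x}],\mathcal{F}_2[\tilde{x}]\bigr)$; this is exactly the hypothesis under which the corollary is stated, and it is what legitimizes dividing by $\mathcal{F}_2[\tilde{x}]$ in the simplification step.
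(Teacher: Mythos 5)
Your proposal is correct and is exactly the deduction the paper intends: the paper gives no written proof for Corollary~\ref{cquotient}, merely noting that it "can be deduced from Theorem~\ref{thm:mr}" under the nonvanishing-denominator assumption, and your specialization $H(y_1,y_2)=y_1/y_2$ with $H'_1=1/y_2$, $H'_2=-y_1/y_2^2$, followed by multiplication through by $\mathcal{F}_2[\tilde{x}]\neq 0$, is precisely that deduction. Your remark on why the nonvanishing of $\mathcal{F}_2$ is needed (so that $H$ is $C^1$ at the relevant point and the division is legitimate) is a correct and welcome clarification of the paper's standing hypothesis.
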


\begin{remark}
In the particular situation $\mathbb{T}=\mathbb{R}$,
Corollary~\ref{cquotient} gives Equation~(3.21) in \cite{CLP}.
\end{remark}

\begin{example}\label{ex:q:1}
Consider the problem
\begin{equation}
\label{ex:quotient:1}
\begin{gathered}
\text{minimize} \quad
\mathcal{L}[x]=\frac{\int_{0}^{2}(x^{\Delta}(t))^2 \Delta
     t}{\int_{0}^{2}(x^{\Delta}(t)+(x^{\Delta}(t))^2)\Delta
     t},\\
     x(0)=0, \quad x(2)=4.
\end{gathered}
\end{equation}
If $\tilde{x}$ is a local minimizer for \eqref{ex:quotient:1}, then
the Euler-Lagrange equation must hold, i.e.,
\begin{equation*}
0=[2\tilde{x}^{\Delta}(t)-Q(1+2\tilde{x}^{\Delta}(t))]^\Delta, \quad
t\in[0,2]^{\kappa},
\end{equation*}
where
\begin{equation*}
Q=\frac{\int_{0}^{2}(\tilde{x}^{\Delta}(t))^2 \Delta
     t}{\int_{0}^{2}(\tilde{x}^{\Delta}(t)+(\tilde{x}^{\Delta}(t))^2)\Delta
     t}.
\end{equation*}
Therefore,
\begin{equation*}
0=2\tilde{x}^{\Delta \Delta}(t)-Q2\tilde{x}^{\Delta \Delta}(t),
\quad t\in[0,2]^{\kappa}.
\end{equation*}
Thus $\tilde{x}^{\Delta \Delta}(t)=0$ or $Q=1$. The solution of the
delta differential equation
\begin{equation*}
    \begin{gathered}
x^{\Delta \Delta}(t)=0,\\
     x(0)=0, \quad x(2)=4
 \end{gathered}
\end{equation*}
does not depend on the time scale and it is $\tilde{x}(t)=2t$.
Observe that $\mathcal{L}[\tilde{x}]=\frac{2}{3}<1$. Therefore,
$\tilde{x}$ is a candidate local minimizer for problem
\eqref{ex:quotient:1}.
\end{example}

\begin{example}
\label{ex:q:2}
Consider the problem
\begin{equation}\label{ex:quotient:2}
\begin{gathered}
\text{extremize} \quad
\mathcal{L}[x]=\frac{\int_{0}^{1}tx^{\Delta}(t) \Delta
     t}{\int_{0}^{1}(x^{\Delta}(t))^2\Delta
     t},\\
     x(0)=0, \quad x(1)=1.
 \end{gathered}
\end{equation}
The Euler-Lagrange equation for this problem is
\begin{equation*}
0=1-2Qx^{\Delta \Delta}(t),
\end{equation*}
where $Q$ is the value of functional $\mathcal{L}$ in a solution of
\eqref{ex:quotient:2}. Since $Q\neq 0$, it follows that
\begin{equation}
\label{ex:2}
x^{\Delta \Delta}(t)=\frac{1}{2Q}.
\end{equation}
Solving equation \eqref{ex:2} we obtain
\begin{equation*}
x(t)=\frac{1}{2Q}\int_{0}^{t}\tau\Delta
\tau+1-\frac{1}{2Q}\int_{0}^{1}\tau\Delta \tau.
\end{equation*}
Therefore, a solution of \eqref{ex:2} depends on the time scale. Let
us consider, for example,  $\mathbb{T}=\mathbb{R}$ and
$\mathbb{T}=\{0,\frac{1}{2},1\}$. On $\mathbb{T}=\mathbb{R}$ we
obtain
\begin{equation}\label{sol:R}
x(t)=\frac{1}{4Q}t^2+\frac{4Q-1}{4Q}t.
\end{equation}
Substituting \eqref{sol:R} into functional $\mathcal{L}$ yields
\begin{equation}\label{equation:Q}
    \frac{24Q^2+2Q}{48Q^2+1}=Q.
\end{equation}
Solving equation \eqref{equation:Q} we obtain
$Q\in\left\{\frac{1}{4}-\frac{\sqrt{3}}{6},0,\frac{1}{4}+\frac{\sqrt{3}}{6}\right\}$.
Therefore,
\begin{equation*}
x_{1}(t)=\frac{3}{3-2\sqrt{3}}t^2+\frac{2\sqrt{3}}{2\sqrt{3}-3}t
\end{equation*}
is a candidate local minimizer while
\begin{equation*}
x_{2}(t)=\frac{3}{3+2\sqrt{3}}t^2+\frac{2\sqrt{3}}{2\sqrt{3}+3}t
\end{equation*}
is a candidate local maximizer for problem \eqref{ex:quotient:2} on
$\mathbb{T}=\mathbb{R}$. \\
The solution of \eqref{ex:2} on
$\mathbb{T}=\left\{0,\frac{1}{2},1\right\}$ is
\begin{gather}\label{sol1:T}
x(t)=
\begin{cases}
0 & \text{ if } t=0 \\
 \frac{1}{2}-\frac{1}{16Q} & \text{ if } t=\frac{1}{2}\\
 1 & \text{ if } t=1.
\end{cases}
\end{gather}
The constant $Q$ is determined by substituting  \eqref{sol1:T} into
$\mathcal{L}$. The resulting equation is
\begin{equation}\label{sol2:T}
    \frac{1}{4}+\frac{1}{32Q}=Q+\frac{1}{64Q} \, .
\end{equation}
Solving \eqref{sol2:T} we obtain $Q\in
\left\{\frac{1}{8}-\frac{\sqrt{2}}{8},\frac{1}{8}+\frac{\sqrt{2}}{8}\right\}$
and stationary functions are
\begin{gather}\label{sol:T:1}
x_{1}(t)=
\begin{cases}
0 & \text{ if } t=0 \\
 \frac{\sqrt{2}}{2\sqrt{2}-2} & \text{ if } t=\frac{1}{2}\\
 1 & \text{ if } t=1,
\end{cases}
\end{gather}
and \begin{gather}\label{sol:T:2}
 x_{2}(t)=
\begin{cases}
0 & \text{ if } t=0 \\
 \frac{\sqrt{2}}{2\sqrt{2}+2} & \text{ if } t=\frac{1}{2}\\
 1 & \text{ if } t=1.
\end{cases}
\end{gather}
\begin{figure}[ht]
\begin{minipage}[b]{0.49\linewidth}
\centering
\includegraphics[scale=0.26]{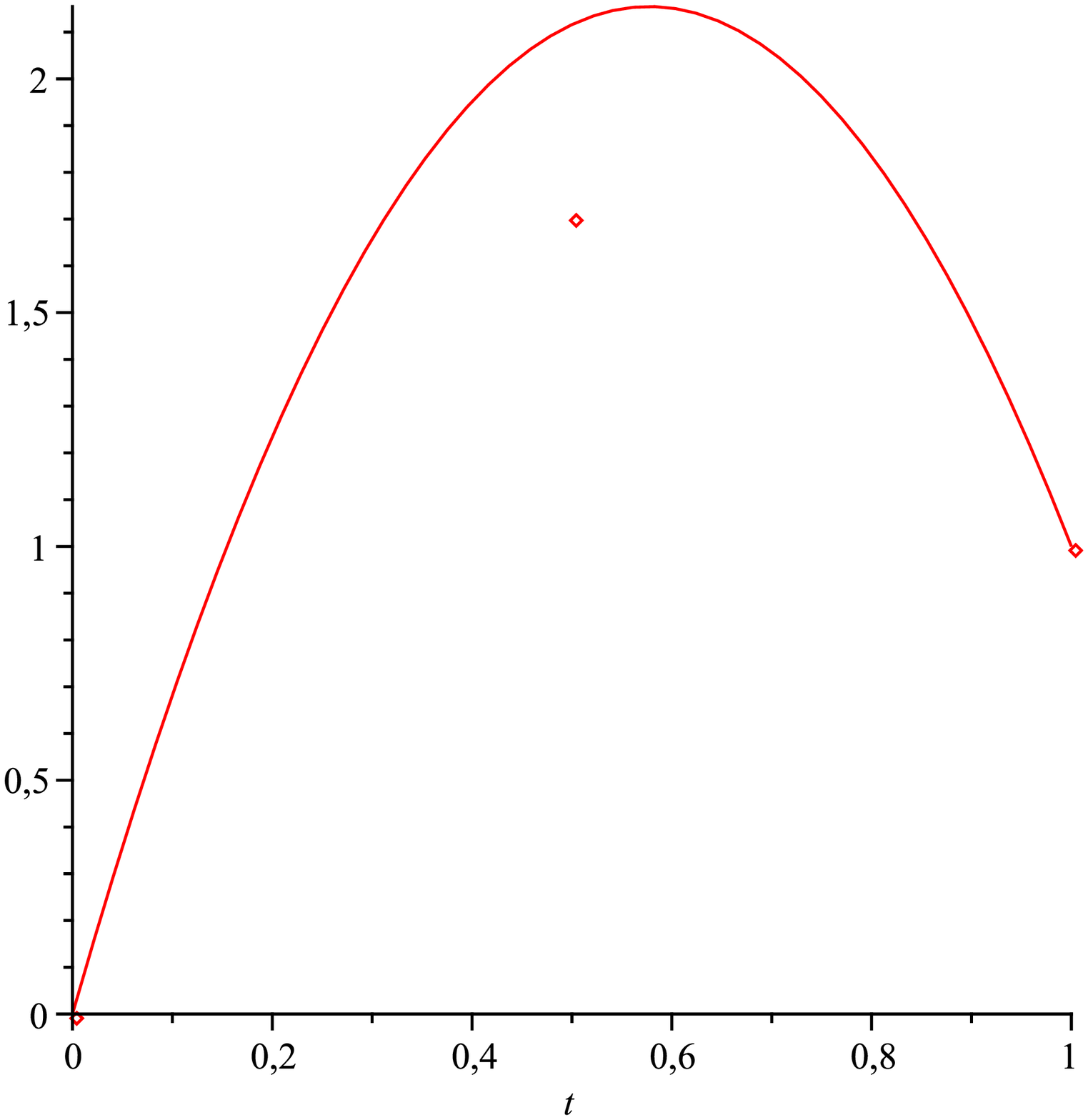}
\caption{The extremal minimizer of Example~\ref{ex:q:2} for
$\mathbb{T}=\mathbb{R}$ and $\mathbb{T}=\{0,\frac{1}{2},1\}$.}
\label{fig1:Ex6}
\end{minipage}
\begin{minipage}[b]{0.49\linewidth}
\centering
\includegraphics[scale=0.26]{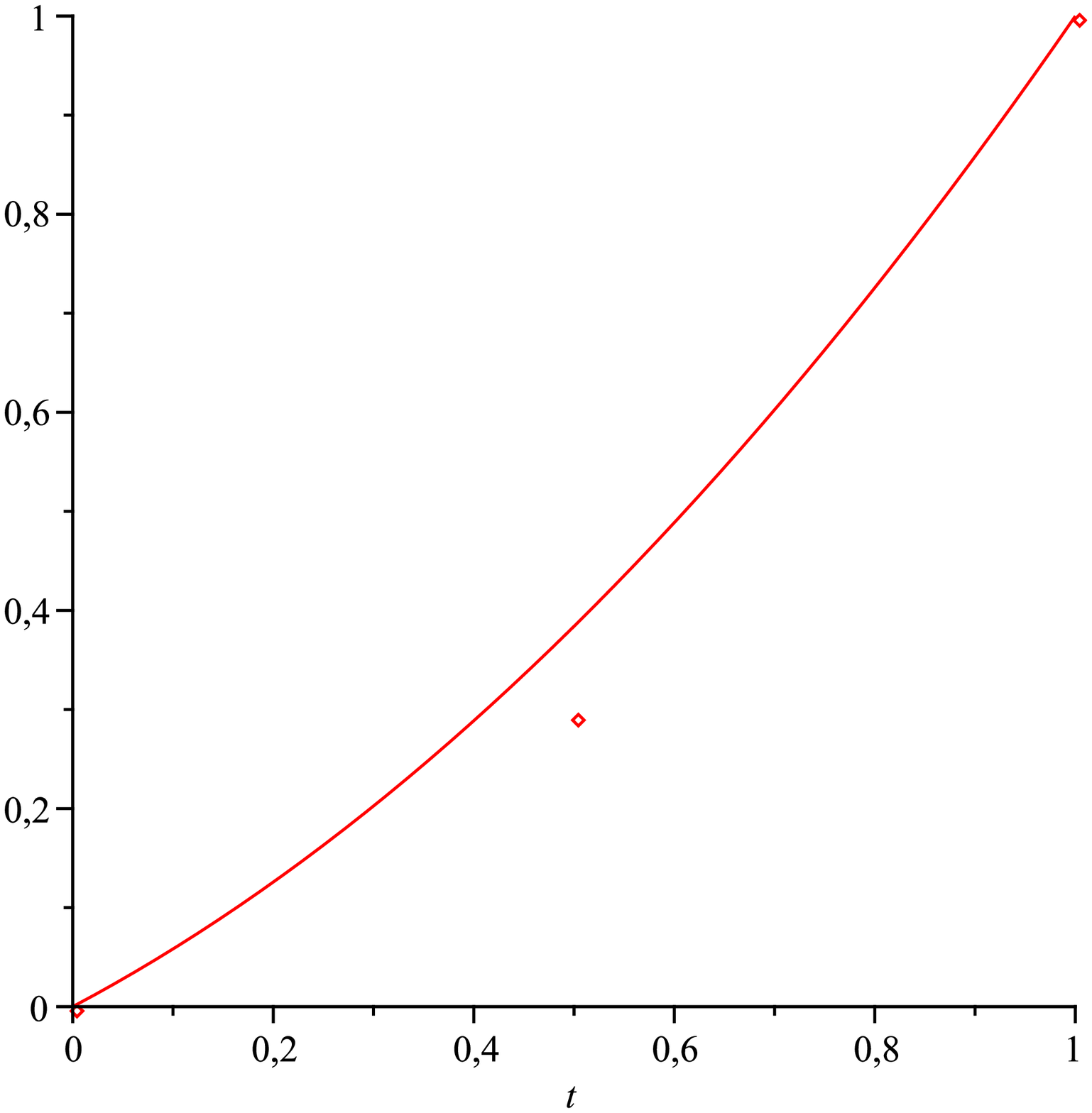}
\caption{The extremal maximizer of Example~\ref{ex:q:2} for for
$\mathbb{T}=\mathbb{R}$ and $\mathbb{T}=\{0,\frac{1}{2},1\}$.}
\label{fig2:Ex6}
\end{minipage}
\end{figure}

Therefore \eqref{sol:T:1} is a candidate local minimizer while
\eqref{sol:T:2} is a candidate local maximizer for problem
\eqref{ex:quotient:2} on
$\mathbb{T}=\left\{0,\frac{1}{2},1\right\}$.
\end{example}

\begin{example}\label{ex:SL}
Consider the problem
\begin{equation}
\label{ex:SL:1}
\begin{gathered}
\text{extremize} \quad
\mathcal{L}[x]=\frac{\int_{a}^{b}[(x^{\Delta}(t))^2-q(t)(x^{\sigma}(t))^2]
\Delta t}{\int_{a}^{b}(x^{\sigma}(t))^2\Delta
     t},\\
     x(a)=0, \quad x(b)=0,
 \end{gathered}
\end{equation}
where $q:[a,b]\rightarrow \mathbb{R}$ is a continuous function. The
Euler-Lagrange equation for this problem is
\begin{equation}\label{ex:SL:2}
x^{\Delta \Delta}(t)+q(t)x^{\sigma}(t)+Qx^{\sigma}(t)=0,
\end{equation}
subject to
\begin{equation}\label{ex:SL:3}
x(a)=0, \quad x(b)=0,
\end{equation}
where $Q$ is the value of functional $\mathcal{L}$ in a solution of
\eqref{ex:SL:1}. It is easily seen that
\eqref{ex:SL:2}--\eqref{ex:SL:3} is a case of the Sturm-Liouville
eigenvalue problem on time scales (see \cite{ABW} and
\cite{Rui:Del:ISO}). It follows that the problem of determining
eigenfunctions of \eqref{ex:SL:2} subject to \eqref{ex:SL:3} is
equivalent to the problem of determining functions satisfying
\eqref{ex:SL:3} which render $\mathcal{L}$ stationary.
\end{example}


\section{Isoperimetric problems}
 \label{sec:Iso}

Let us consider now the general (non-classical) isoperimetric
problem on time scales. The problem consists of minimizing or
maximizing
\begin{equation}\label{ivp}
 \mathcal{L}[x]=H\left(\int_{a}^{b}f_{1}(t,x^{\sigma}(t),x^{\Delta}(t))\Delta
     t,\ldots, \int_{a}^{b}f_{n}(t,x^{\sigma}(t),x^{\Delta}(t))\Delta
     t\right),
\end{equation}
in the class of functions $x\in C_{rd}^{1}$ satisfying the boundary
conditions
\begin{equation}\label{bivp}
x(a)=x_{a}, \quad x(b)=x_{b}
\end{equation}
and the constraint
\begin{equation}\label{civp}
\mathcal{K}[x]=P\left(\int_{a}^{b}g_{1}(t,x^{\sigma}(t),x^{\Delta}(t))\Delta
     t,\ldots, \int_{a}^{b}g_{m}(t,x^{\sigma}(t),x^{\Delta}(t))\Delta
     t\right)=k,
\end{equation}
where $x_{a}$, $x_{b}$, $k$ are given real numbers. We assume that:

\begin{itemize}
\item[(i)] functions $H:\mathbb{R}^{n}\rightarrow \mathbb{R}$
and $P:\mathbb{R}^{m}\rightarrow \mathbb{R}$ have continuous
partial derivatives with respect to their arguments and we denote
them by $H'_{i}$, $i=1,\ldots,n$, and $P'_{i}$, $i=1,\ldots,m$;

\item[(ii)] functions $(t,y,v)\rightarrow f_{i}(t,y,v)$, $i=1,\ldots,n$,
and $(t,y,v)\rightarrow g_{j}(t,y,v)$, $j=1,\ldots,m$,
from $[a,b]\times \mathbb{R}^{2}$ to $\mathbb{R}$
have partial continuous derivatives with respect to
$y,v$ for all $t\in[a,b]$ and we denote them by $f_{iy}$, $f_{iv}$
and $g_{jy}$, $g_{jv}$;

\item [(iii)] $f_{i}$, $f_{iy}$, $f_{iv}$, $i=1,\ldots,n$,
and $g_{j}$, $g_{jy}$, $g_{jv}$, $j=1,\ldots,m$,
are rd-continuous in $t$ for all $x\in C_{rd}^{1}$.
\end{itemize}

\begin{definition}
An admissible function $\tilde{x}$ is said to be a \emph{weak local
minimizer} (respectively \emph{weak local maximizer}) for the
isoperimetric problem \eqref{ivp}--\eqref{civp} if there exists
$\delta
>0$ such that $\mathcal{L}[\tilde{x}]\leq \mathcal{L}[x]$
(respectively $\mathcal{L}[\tilde{x}] \geq \mathcal{L}[x]$)
for all admissible $x$ satisfying the boundary conditions
\eqref{bivp}, the isoperimetric constraint \eqref{civp}, and
$\|x-\tilde{x}\|_{1}<\delta$.
\end{definition}

\begin{definition}
We say that $\tilde{x}$ is an extremal for $\mathcal{K}$ if
\begin{equation*}
\sum_{i=1}^{m}P'_{i}(\mathcal{G}_{1}[\tilde{x}],\ldots,\mathcal{G}_{m}[\tilde{x}])
\left(g_{iv}(\bullet)-\int_a^t g_{iy}(\circ)\Delta \tau \right)=c,
\end{equation*}
where $(\bullet) =
\left(t,\tilde{x}^{\sigma}(t),\tilde{x}^{\Delta}(t)\right)$ and
$(\circ) =
\left(\tau,\tilde{x}^{\sigma}(\tau),\tilde{x}^{\Delta}(\tau)\right)$,
for some constant $c$ and for all $t \in [a,b]$. An extremizer
(i.e., a weak local minimizer or a weak local maximizer) for the
problem \eqref{ivp}--\eqref{civp} that is not an extremal for
$\mathcal{K}$ is said to be a normal extremizer; otherwise (i.e., if
it is an extremal for $\mathcal{K}$), the extremizer is said to be
abnormal.
\end{definition}

\begin{theorem}
\label{th:iso}
If $\tilde{x}$ is a normal extremizer for the isoperimetric problem
\eqref{ivp}--\eqref{civp}, then there exists a real $\lambda$ such that
\begin{multline}\label{iso}
\sum_{i=1}^{n}H'_{i}(\mathcal{F}_{1}[\tilde{x}],\ldots,
\mathcal{F}_{n}[\tilde{x}])\left(f_{iv}^{\Delta}(t,\tilde{x}^{\sigma}(t),\tilde{x}^{\Delta}(t))
-f_{iy}(t,\tilde{x}^{\sigma}(t),\tilde{x}^{\Delta}(t))\right)\\
-\lambda\sum_{i=1}^{m}P'_{i}(\mathcal{G}_{1}[\tilde{x}],\ldots,
\mathcal{G}_{m}[\tilde{x}])\left(g_{iv}^{\Delta}(t,\tilde{x}^{\sigma}(t),\tilde{x}^{\Delta}(t))
- g_{iy}(t,\tilde{x}^{\sigma}(t),\tilde{x}^{\Delta}(t))\right)=0
\end{multline}
for all $t \in [a,b]^\kappa$.
\end{theorem}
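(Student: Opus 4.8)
The plan is to carry over the classical Lagrange multiplier technique to the time scale setting, reusing verbatim the chain-rule and integration-by-parts computations already performed in the proof of Theorem~\ref{thm:mr}. Let $\tilde{x}$ be a normal extremizer. Consider a two-parameter family of admissible functions $x = \tilde{x} + \varepsilon_1 h_1 + \varepsilon_2 h_2$, where $h_1,h_2 \in C^1_{rd}$ satisfy $h_j(a)=h_j(b)=0$ (so the boundary conditions \eqref{bivp} are preserved) and $(\varepsilon_1,\varepsilon_2)$ ranges over a neighbourhood of the origin. Set $\hat{\mathcal{K}}(\varepsilon_1,\varepsilon_2) = \mathcal{K}[\tilde{x}+\varepsilon_1 h_1 + \varepsilon_2 h_2]$ and $\hat{\mathcal{L}}(\varepsilon_1,\varepsilon_2) = \mathcal{L}[\tilde{x}+\varepsilon_1 h_1 + \varepsilon_2 h_2]$; by hypotheses (i)--(iii) and differentiation under the delta integral these are $C^1$ near $(0,0)$.

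First I would compute $\left.\partial\hat{\mathcal{K}}/\partial\varepsilon_2\right|_{(0,0)}$ exactly as in Theorem~\ref{thm:mr} (chain rule, then integration by parts shifting the $\Delta$-derivative onto $h_2^\sigma$), obtaining
\[
\left.\frac{\partial\hat{\mathcal{K}}}{\partial\varepsilon_2}\right|_{(0,0)} = \int_a^b h_2^\Delta(t)\sum_{i=1}^{m}P'_i(\mathcal{G}_1[\tilde{x}],\ldots,\mathcal{G}_m[\tilde{x}])\left(g_{iv}(\bullet)-\int_a^t g_{iy}(\circ)\Delta\tau\right)\Delta t .
\]
Since $\tilde{x}$ is \emph{not} an extremal for $\mathcal{K}$, the bracketed function is not constant on $[a,b]^\kappa$, so by the Lemma of Dubois-Reymond (Lemma~\ref{lemma:DR}) there is a choice of $h_2$ for which this derivative is nonzero; fix such an $h_2$. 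As $\hat{\mathcal{K}}(0,0)=k$, the Implicit Function Theorem yields a $C^1$ map $\varepsilon_2 = \varepsilon_2(\varepsilon_1)$ near $\varepsilon_1=0$ with $\varepsilon_2(0)=0$ and $\hat{\mathcal{K}}(\varepsilon_1,\varepsilon_2(\varepsilon_1))=k$; hence $\tilde{x}+\varepsilon_1 h_1 + \varepsilon_2(\varepsilon_1)h_2$ is admissible for the isoperimetric problem for all small $\varepsilon_1$.

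Next, since $\tilde{x}$ is a weak local extremizer, $\varepsilon_1\mapsto\hat{\mathcal{L}}(\varepsilon_1,\varepsilon_2(\varepsilon_1))$ has a local extremum at $\varepsilon_1=0$; differentiating and eliminating $\varepsilon_2'(0)$ by means of $\nabla\hat{\mathcal{K}}\cdot(1,\varepsilon_2'(0))=0$ gives, with the fixed real number $\lambda := \left.\bigl(\partial\hat{\mathcal{L}}/\partial\varepsilon_2\bigr)\big/\bigl(\partial\hat{\mathcal{K}}/\partial\varepsilon_2\bigr)\right|_{(0,0)}$,
\[
\left.\frac{\partial\hat{\mathcal{L}}}{\partial\varepsilon_1}\right|_{(0,0)} - \lambda\left.\frac{\partial\hat{\mathcal{K}}}{\partial\varepsilon_1}\right|_{(0,0)} = 0 .
\]
Crucially $\lambda$ depends only on $\tilde{x}$ and $h_2$, not on $h_1$ nor on $t$. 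Computing the two $\varepsilon_1$-derivatives as above and collecting everything into a single integral against $h_1^\Delta$, the arbitrariness of $h_1$ together with Lemma~\ref{lemma:DR} shows that
\[
\sum_{i=1}^{n}H'_i(\mathcal{F}_1[\tilde{x}],\ldots,\mathcal{F}_n[\tilde{x}])\left(f_{iv}(\bullet)-\int_a^t f_{iy}(\circ)\Delta\tau\right) - \lambda\sum_{i=1}^{m}P'_i(\mathcal{G}_1[\tilde{x}],\ldots,\mathcal{G}_m[\tilde{x}])\left(g_{iv}(\bullet)-\int_a^t g_{iy}(\circ)\Delta\tau\right)
\]
is constant on $[a,b]$; delta-differentiating (legitimate on $[a,b]^\kappa$, as in the passage from \eqref{eq:EL} to \eqref{Euler}) yields \eqref{iso}.

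The main obstacle is organizational rather than conceptual: one must verify that the Implicit Function Theorem genuinely applies — i.e. that $\hat{\mathcal{K}}$ is $C^1$ near the origin with $\left.\partial\hat{\mathcal{K}}/\partial\varepsilon_2\right|_{(0,0)}\neq 0$ for a \emph{single} fixed $h_2$ — and that the multiplier $\lambda$ extracted from the reduced one-parameter problem is independent of the auxiliary variation $h_1$. This independence is precisely what the normality hypothesis buys us: $\lambda$ is determined by $h_2$ alone before $h_1$ is chosen, so the concluding Dubois-Reymond step is free to range over all admissible $h_1$ with one and the same $\lambda$.
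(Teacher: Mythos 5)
Your proposal is correct and follows essentially the same route as the paper: the same two-parameter variation $\tilde{x}+\varepsilon_1 h_1+\varepsilon_2 h_2$, the same use of normality plus the Dubois-Reymond lemma to find $h_2$ with $\left.\partial\hat{\mathcal{K}}/\partial\varepsilon_2\right|_{(0,0)}\neq 0$, the implicit function theorem, and the final Dubois-Reymond and delta-differentiation step. The only (cosmetic) difference is that you re-derive the two-variable Lagrange multiplier rule explicitly, with $\lambda$ given as a ratio of partials, where the paper simply cites the rule.
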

\begin{proof}
Consider a variation of $\tilde{x}$, say
$\bar{x}=\tilde{x}+\varepsilon_{1}h_{1}+\varepsilon_{2}h_{2}$, where
$h_{i}\in C_{rd}^{1}$ and $h_{i}(a)=h_{i}(b)=0$, $i=1,2$, and
$\varepsilon_{i}$ is a sufficiently small parameter
($\varepsilon_{1}$ and $\varepsilon_{2}$ must be such that
$\|\bar{x}-\tilde{x}\|_{1}<\delta$ for some $\delta>0$). Here,
$h_{1}$ is an arbitrary fixed function and $h_{2}$ is a fixed
function that will be chosen later. Define the real function
\begin{equation*}
\bar{K}(\varepsilon_{1},\varepsilon_{2})=\mathcal{K}[\bar{x}]
=P\left(\int_{a}^{b}g_{1}(t,\bar{x}^{\sigma}(t),\bar{x}^{\Delta}(t))\Delta t,
\ldots, \int_{a}^{b}g_{m}(t,\bar{x}^{\sigma}(t),\bar{x}^{\Delta}(t))\Delta t\right)-k.
\end{equation*}
We have
\begin{equation*}
\left.\frac{\partial\bar{K}}{\partial
\varepsilon_{2}}\right|_{(0,0)}
=\sum_{i=1}^{m}P'_{i}(\mathcal{G}_{1}[\tilde{x}],\ldots,\mathcal{G}_{m}[\tilde{x}])
\int_a^b \left[ g_{iy}(\bullet) h_{2}^{\sigma}(t) + g_{iv}(\bullet)
h_{2}^{\Delta}(t)\right]\Delta t,
\end{equation*}
where $(\bullet) =
\left(t,\tilde{x}^{\sigma}(t),\tilde{x}^{\Delta}(t)\right)$. Since
$h_{2}(a)=h_{2}(b)=0$, integration by parts gives
\begin{equation*}
\int_{a}^{b}h_{2}^{\Delta}(t)\sum_{i=1}^{m}P'_{i}(\mathcal{G}_{1}[\tilde{x}],
\ldots,\mathcal{G}_{m}[\tilde{x}])
\left(g_{iv}(\bullet)-\int_a^t g_{iy}(\circ)\Delta \tau \right)\Delta t,
\end{equation*}
where $(\circ) =
\left(\tau,\tilde{x}^{\sigma}(\tau),\tilde{x}^{\Delta}(\tau)\right)$.
By Lemma~\ref{lemma:DR}, there exists $h_{2}$ such that
$\left.\frac{\partial\bar{K}}{\partial
\varepsilon_{2}}\right|_{(0,0)}\neq 0$. Since $\bar{K}(0,0)=0$, by
the implicit function theorem we conclude that there exists a
function $\varepsilon_{2}$ defined in the neighborhood of zero, such
that $\bar{K}(\varepsilon_{1},\varepsilon_{2}(\varepsilon_{1}))=0$,
i.e., we may choose a subset of variations $\bar{x}$ satisfying the
isoperimetric constraint. \\
Let us now consider the real function
\begin{equation*}
\bar{L}(\varepsilon_{1},\varepsilon_{2})=\mathcal{L}[\bar{x}]
=H\left(\int_{a}^{b}f_{1}(t,\bar{x}^{\sigma}(t),\bar{x}^{\Delta}(t))\Delta t,
\ldots, \int_{a}^{b}f_{n}(t,\bar{x}^{\sigma}(t),\bar{x}^{\Delta}(t))\Delta t\right).
\end{equation*}
By hypothesis, $(0,0)$ is an extremal of $\bar{L}$ subject to the
constraint $\bar{K}=0$ and $\nabla \bar{K}(0,0)\neq \textbf{0}$. By
the Lagrange multiplier rule, there exists some real $\lambda$ such
that $\nabla(\bar{L}(0,0)-\lambda\bar{K}(0,0))=\textbf{0}$. Having
in mind that $h_{1}(a)=h_{1}(b)=0$, we can write
\begin{equation*}
\left.\frac{\partial\bar{L}}{\partial
\varepsilon_{1}}\right|_{(0,0)}
=\int_{a}^{b}h_{1}^{\Delta}(t)\sum_{i=1}^{n}H'_{i}(\mathcal{F}_{1}[\tilde{x}],
\ldots,\mathcal{F}_{n}[\tilde{x}])
\left(f_{iv}(\bullet)-\int_a^t f_{iy}(\circ)\Delta \tau
\right)\Delta t
\end{equation*}
and
\begin{equation*}
\left.\frac{\partial\bar{K}}{\partial
\varepsilon_{1}}\right|_{(0,0)}
=\int_{a}^{b}h_{1}^{\Delta}(t)\sum_{i=1}^{m}P'_{i}(\mathcal{G}_{1}[\tilde{x}],
\ldots,\mathcal{G}_{m}[\tilde{x}])
\left(g_{iv}(\bullet)-\int_a^t g_{iy}(\circ)\Delta \tau
\right)\Delta t.
\end{equation*}
Therefore,
\begin{multline}\label{iso:3}
\int_{a}^{b}h_{1}^{\Delta}(t)\left\{\sum_{i=1}^{n}H'_{i}(\mathcal{F}_{1}[\tilde{x}],
\ldots,\mathcal{F}_{n}[\tilde{x}])
\left(f_{iv}(\bullet)-\int_a^t f_{iy}(\circ)\Delta \tau \right)\right.\\
-\left.\lambda \sum_{i=1}^{m}P'_{i}(\mathcal{G}_{1}[\tilde{x}],
\ldots,\mathcal{G}_{m}[\tilde{x}])
\left(g_{iv}(\bullet)-\int_a^t g_{iy}(\circ)\Delta \tau
\right)\right\}\Delta t=0.
\end{multline}
As \eqref{iso:3} holds for any $h_{1}$, by Lemma~\ref{lemma:DR}, we
have
\begin{multline}\label{iso:4}
\sum_{i=1}^{n}H'_{i}(\mathcal{F}_{1}[\tilde{x}],\ldots,\mathcal{F}_{n}[\tilde{x}])
\left(f_{iv}(\bullet)-\int_a^t f_{iy}(\circ)\Delta \tau \right)\\
-\lambda
\sum_{i=1}^{m}P'_{i}(\mathcal{G}_{1}[\tilde{x}],\ldots,\mathcal{G}_{m}[\tilde{x}])
\left(g_{iv}(\bullet)-\int_a^t g_{iy}(\circ)\Delta \tau \right)=c,
\end{multline}
for some $c\in \mathbb{R}$. Applying the delta derivative to both
sides of equation \eqref{iso:4}, we get \eqref{iso}.
\end{proof}

\begin{remark}
Choosing $H,P:\mathbb{R}\rightarrow \mathbb{R}$ and $H=P=id$ in
Theorem~\ref{th:iso} we immediately obtain Theorem~3.4 in
\cite{Rui:Del:ISO} and a particular case of Theorem~3.4 in
\cite{Mal:Tor:09}.
\end{remark}

One can easily cover abnormal extremizers within our result by
introducing an extra multiplier $\lambda_{0}$.

\begin{theorem}
\label{th:iso:abn}
If $\tilde{x}$ is an extremizer for the isoperimetric problem
\eqref{ivp}--\eqref{civp}, then there exist two constants
$\lambda_{0}$ and $\lambda$, not both zero, such that
\begin{multline}
\label{iso:abn}
\lambda_{0}\sum_{i=1}^{n}H'_{i}(\mathcal{F}_{1}[\tilde{x}],
\ldots,\mathcal{F}_{n}[\tilde{x}])\left(f_{iv}^{\Delta}(t,\tilde{x}^{\sigma}(t),\tilde{x}^{\Delta}(t))
-f_{iy}(t,\tilde{x}^{\sigma}(t),\tilde{x}^{\Delta}(t))\right)\\
-\lambda\sum_{i=1}^{m}P'_{i}(\mathcal{G}_{1}[\tilde{x}],\ldots,
\mathcal{G}_{m}[\tilde{x}])\left(g_{iv}^{\Delta}(t,\tilde{x}^{\sigma}(t),\tilde{x}^{\Delta}(t))
-g_{iy}(t,\tilde{x}^{\sigma}(t),\tilde{x}^{\Delta}(t))\right)=0
\end{multline}
for all $t \in [a,b]^\kappa$.
\end{theorem}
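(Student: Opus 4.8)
The plan is to mimic the proof of Theorem~\ref{th:iso} but carry along an extra multiplier so that the argument survives the degenerate case in which $\tilde{x}$ is an extremal for $\mathcal{K}$. As before, I would introduce the two-parameter variation $\bar{x}=\tilde{x}+\varepsilon_{1}h_{1}+\varepsilon_{2}h_{2}$ with $h_{i}\in C_{rd}^{1}$, $h_{i}(a)=h_{i}(b)=0$, and form the real-valued functions $\bar{L}(\varepsilon_{1},\varepsilon_{2})=\mathcal{L}[\bar{x}]$ and $\bar{K}(\varepsilon_{1},\varepsilon_{2})=\mathcal{K}[\bar{x}]-k$. By hypothesis $(0,0)$ is an extremizer of $\bar{L}$ subject to $\bar{K}=0$, but now I do \emph{not} assume $\nabla\bar{K}(0,0)\neq\mathbf{0}$. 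I would instead invoke the Lagrange multiplier rule in its general (Fritz~John) form: if $(0,0)$ is a local extremizer of $\bar{L}$ on the set $\{\bar{K}=0\}$, then there exist constants $\lambda_{0},\lambda$, not both zero, with $\nabla\bigl(\lambda_{0}\bar{L}(0,0)-\lambda\bar{K}(0,0)\bigr)=\mathbf{0}$.

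The next step is to write out the first partials. Exactly as in the proof of Theorem~\ref{th:iso}, using the chain rule for $H$ and $P$, integration by parts on the $f_{iy}h_{1}^{\sigma}$ and $g_{iy}h_{1}^{\sigma}$ terms, and the vanishing of $h_{1}$ at the endpoints, I get
\begin{equation*}
\left.\frac{\partial\bar{L}}{\partial\varepsilon_{1}}\right|_{(0,0)}
=\int_{a}^{b}h_{1}^{\Delta}(t)\sum_{i=1}^{n}H'_{i}(\mathcal{F}_{1}[\tilde{x}],\ldots,\mathcal{F}_{n}[\tilde{x}])\left(f_{iv}(\bullet)-\int_a^t f_{iy}(\circ)\Delta\tau\right)\Delta t
\end{equation*}
and the analogous formula for $\partial\bar{K}/\partial\varepsilon_{1}$ with $H'_{i},\mathcal{F},f$ replaced by $P'_{i},\mathcal{G},g$ and the sum running to $m$. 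The stationarity condition $\lambda_{0}\,\partial\bar{L}/\partial\varepsilon_{1}-\lambda\,\partial\bar{K}/\partial\varepsilon_{1}=0$ at $(0,0)$ then reads
\begin{multline*}
\int_{a}^{b}h_{1}^{\Delta}(t)\Biggl\{\lambda_{0}\sum_{i=1}^{n}H'_{i}(\mathcal{F}_{1}[\tilde{x}],\ldots,\mathcal{F}_{n}[\tilde{x}])\left(f_{iv}(\bullet)-\int_a^t f_{iy}(\circ)\Delta\tau\right)\Biggr.\\
\Biggl.-\lambda\sum_{i=1}^{m}P'_{i}(\mathcal{G}_{1}[\tilde{x}],\ldots,\mathcal{G}_{m}[\tilde{x}])\left(g_{iv}(\bullet)-\int_a^t g_{iy}(\circ)\Delta\tau\right)\Biggr\}\Delta t=0.
\end{multline*}
Since $h_{1}\in C_{rd}^{1}$ with $h_{1}(a)=h_{1}(b)=0$ is arbitrary, the Dubois-Reymond Lemma~\ref{lemma:DR} forces the braced expression to equal a constant $c$ on $[a,b]^{\kappa}$. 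Applying the delta derivative to both sides annihilates $c$ and yields \eqref{iso:abn}.

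It remains to rule out the trivial case $\lambda_{0}=\lambda=0$, which the Fritz~John rule allows a priori. The point is the dichotomy: if $\nabla\bar{K}(0,0)\neq\mathbf{0}$ then one may take $\lambda_{0}=1$ and we are back in the normal case handled by Theorem~\ref{th:iso}; if instead $\nabla\bar{K}(0,0)=\mathbf{0}$, then in particular $\partial\bar{K}/\partial\varepsilon_{1}|_{(0,0)}=0$ for every choice of $h_{1}$, which by Lemma~\ref{lemma:DR} says precisely that $\sum_{i=1}^{m}P'_{i}(\mathcal{G}_{1}[\tilde{x}],\ldots,\mathcal{G}_{m}[\tilde{x}])\bigl(g_{iv}(\bullet)-\int_a^t g_{iy}(\circ)\Delta\tau\bigr)$ is constant, i.e.\ $\tilde{x}$ is an extremal for $\mathcal{K}$; then we may choose $\lambda_{0}=0$, $\lambda=1$, and \eqref{iso:abn} holds because both summands vanish (the first by the factor $\lambda_{0}=0$, the second after applying the delta derivative to the just-established constant). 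In either branch $(\lambda_{0},\lambda)\neq(0,0)$, which completes the proof. The only subtle point is the invocation of the abnormal Lagrange multiplier rule in two variables together with this case analysis; everything else is a verbatim repetition of the computations in the proof of Theorem~\ref{th:iso}.
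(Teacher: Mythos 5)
Your proposal is correct and follows essentially the same route as the paper: the same two-parameter variation $\bar{x}=\tilde{x}+\varepsilon_{1}h_{1}+\varepsilon_{2}h_{2}$, the extended (Fritz John) Lagrange multiplier rule producing $(\lambda_{0},\lambda)\neq(0,0)$, the Dubois--Reymond lemma, and a final delta differentiation. The closing dichotomy paragraph is redundant --- the abnormal multiplier rule already asserts that the multipliers are not both zero --- but it is a harmless, self-contained check that the conclusion is nonvacuous in both the normal case (reduce to Theorem~\ref{th:iso} with $\lambda_{0}=1$) and the abnormal case ($\lambda_{0}=0$, $\lambda=1$).
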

\begin{proof}
Following the proof of Theorem~\ref{th:iso}, since $(0,0)$ is an
extremal of $\bar{L}$ subject to the constraint $\bar{K}=0$, the
extended Lagrange multiplier rule (see, for instance,
\cite[Theorem~4.1.3]{Brunt}) asserts the existence of reals $\lambda_{0}$ and
$\lambda$, not both zero, such that
$\nabla(\lambda_{0}\bar{L}(0,0)-\lambda\bar{K}(0,0))=\textbf{0}$.
Therefore,
\begin{equation*}
\lambda_{0}\left.\frac{\partial\bar{L}}{\partial
\varepsilon_{1}}\right|_{(0,0)}-\lambda\left.\frac{\partial\bar{K}}{\partial
\varepsilon_{1}}\right|_{(0,0)}=0
\end{equation*}
\begin{multline}\label{abn:1}
\Leftrightarrow
\int_{a}^{b}h_{1}^{\Delta}(t)\left\{\lambda_{0}\sum_{i=1}^{n}H'_{i}(\mathcal{F}_{1}[\tilde{x}],
\ldots,\mathcal{F}_{n}[\tilde{x}])
\left(f_{iv}(\bullet)-\int_a^t f_{iy}(\circ)\Delta \tau \right)\right.\\
-\left.\lambda
\sum_{i=1}^{m}P'_{i}(\mathcal{G}_{1}[\tilde{x}],\ldots,\mathcal{G}_{m}[\tilde{x}])
\left(g_{iv}(\bullet)-\int_a^t g_{iy}(\circ)\Delta \tau
\right)\right\}\Delta t=0.
\end{multline}
Since \eqref{abn:1} holds for any $h_{1}$, it follows by
Lemma~\ref{lemma:DR} that
\begin{multline}\label{abn:2}
\lambda_{0}\sum_{i=1}^{n}H'_{i}(\mathcal{F}_{1}[\tilde{x}],\ldots,\mathcal{F}_{n}[\tilde{x}])
\left(f_{iv}(\bullet)-\int_a^t f_{iy}(\circ)\Delta \tau \right)\\
-\lambda
\sum_{i=1}^{m}P'_{i}(\mathcal{G}_{1}[\tilde{x}],\ldots,\mathcal{G}_{m}[\tilde{x}])
\left(g_{iv}(\bullet)-\int_a^t g_{iy}(\circ)\Delta \tau \right)=c,
\end{multline}
for some $c\in \mathbb{R}$. The desired condition \eqref{iso:abn}
follows by delta differentiation of \eqref{abn:2}.
\end{proof}

\begin{remark}
If $\tilde{x}$ is a normal extremizer for the isoperimetric problem
\eqref{ivp}--\eqref{civp}, then we can choose $\lambda_{0}=1$ in
Theorem~\ref{th:iso:abn} and obtain Theorem~\ref{th:iso}. For
abnormal extremizers, Theorem~\ref{th:iso:abn} holds with
$\lambda_{0}=0$. The condition $(\lambda_{0},\lambda)\neq\textbf{0}$
guarantees that Theorem~\ref{th:iso:abn} is a useful necessary
condition.
\end{remark}

\begin{corollary}
\label{c:iso:abn}
\begin{itemize}
\item[(i)] If $\tilde{x}$ is an extremizer for the isoperimetric
problem
\begin{equation*}
\begin{gathered}
\text{extremize}\quad
\mathcal{L}[x]=\left(\int_{a}^{b}f_{1}(t,x^{\sigma}(t),x^{\Delta}(t))\Delta
     t\right)\left( \int_{a}^{b}f_{2}(t,x^{\sigma}(t),x^{\Delta}(t))\Delta
     t)\right),\\
     x(a)=x_{a}, \quad x(b)=x_{b},
\end{gathered}
\end{equation*}
subject to the constraint
\begin{equation*}
\mathcal{K}[x]=\left(\int_{a}^{b}g_{1}(t,x^{\sigma}(t),x^{\Delta}(t))\Delta
     t\right)\left( \int_{a}^{b}g_{2}(t,x^{\sigma}(t),x^{\Delta}(t))\Delta
     t)\right)=k,
\end{equation*}
then there exist two constants $\lambda_{0}$ and $\lambda$, not both
zero, such that
\begin{equation*}
\lambda_{0}\left\{\mathcal{F}_{2}[\tilde{x}]\left(f_{1v}^{\Delta}(t,\tilde{x}^{\sigma}(t),\tilde{x}^{\Delta}(t))
-f_{1y}(t,\tilde{x}^{\sigma}(t),\tilde{x}^{\Delta}(t)\right)\right.
\end{equation*}
\begin{equation*}
\left.+\mathcal{F}_{1}[\tilde{x}]\left(f_{2v}^{\Delta}(t,\tilde{x}^{\sigma}(t),\tilde{x}^{\Delta}(t))
-f_{2y}(t,\tilde{x}^{\sigma}(t),\tilde{x}^{\Delta}(t))\right)\right\}
\end{equation*}
\begin{equation*}
-\lambda \left\{
\mathcal{G}_{2}[\tilde{x}]\left(g_{1v}^{\Delta}(t,\tilde{x}^{\sigma}(t),\tilde{x}^{\Delta}(t))
-g_{1y}(t,\tilde{x}^{\sigma}(t),\tilde{x}^{\Delta}(t)\right)\right.
\end{equation*}
\begin{equation*}
+\left.\mathcal{G}_{1}[\tilde{x}]\left(g_{2v}^{\Delta}(t,\tilde{x}^{\sigma}(t),\tilde{x}^{\Delta}(t))
-g_{2y}(t,\tilde{x}^{\sigma}(t),\tilde{x}^{\Delta}(t))\right)\right\}=0
\end{equation*}
for all $t \in [a,b]^\kappa$.

\item[(ii)] Assume that denominators of $\mathcal{L}$ and $\mathcal{K}$ do not vanish.
If $\tilde{x}$ is an extremizer for the isoperimetric problem
\begin{equation*}
\begin{split}
\text{extremize}\quad
\mathcal{L}[x]=\frac{\int_{a}^{b}f_{1}(t,x^{\sigma}(t),x^{\Delta}(t))\Delta
     t}{\int_{a}^{b}f_{2}(t,x^{\sigma}(t),x^{\Delta}(t))\Delta
     t}, \quad x(a)=x_{a}, \quad x(b)=x_{b},\\
 \end{split}
\end{equation*}
subject to the constraint
\begin{equation*}
\mathcal{K}[x]=\frac{\int_{a}^{b}g_{1}(t,x^{\sigma}(t),x^{\Delta}(t))\Delta
     t}{\int_{a}^{b}g_{2}(t,x^{\sigma}(t),x^{\Delta}(t))\Delta
     t}=k,
\end{equation*}
then there exist two constants $\lambda_{0}$ and $\lambda$, not both
zero, such that
\begin{equation*}
\lambda_{0}\left\{\mathcal{G}_{2}[\tilde{x}]\left(f_{1v}^{\Delta}(t,\tilde{x}^{\sigma}(t),\tilde{x}^{\Delta}(t))
-f_{1y}(t,\tilde{x}^{\sigma}(t),\tilde{x}^{\Delta}(t)\right)\right.
\end{equation*}
\begin{equation*}
\left.-\mathcal{G}_{2}[\tilde{x}]Q_{L}\left(f_{2v}^{\Delta}(t,\tilde{x}^{\sigma}(t),\tilde{x}^{\Delta}(t))
-f_{2y}(t,\tilde{x}^{\sigma}(t),\tilde{x}^{\Delta}(t))\right)\right\}
\end{equation*}
\begin{equation*}
-\lambda\left\{\mathcal{F}_{2}[\tilde{x}]\left(g_{1v}^{\Delta}(t,\tilde{x}^{\sigma}(t),\tilde{x}^{\Delta}(t))
-g_{1y}(t,\tilde{x}^{\sigma}(t),\tilde{x}^{\Delta}(t)\right)\right.
\end{equation*}
\begin{equation*}
\left.-\mathcal{F}_{2}[\tilde{x}]Q_{K}\left(g_{2v}^{\Delta}(t,\tilde{x}^{\sigma}(t),\tilde{x}^{\Delta}(t))
-g_{2y}(t,\tilde{x}^{\sigma}(t),\tilde{x}^{\Delta}(t))\right)\right\}=0
\end{equation*}
holds for all $t \in [a,b]^\kappa$, where
$Q_{L}=\frac{\mathcal{F}_{1}[\tilde{x}]}{\mathcal{F}_{2}[\tilde{x}]}$
and
$Q_{K}=\frac{\mathcal{G}_{1}[\tilde{x}]}{\mathcal{G}_{2}[\tilde{x}]}$.
\end{itemize}
\end{corollary}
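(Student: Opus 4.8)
The plan is to obtain both items as direct specializations of Theorem~\ref{th:iso:abn}, choosing the outer functions $H$ and $P$ appropriately and computing their partial derivatives.

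For item~(i) I would set $n=m=2$, $H(u_1,u_2)=u_1u_2$ and $P(w_1,w_2)=w_1w_2$. Then $H'_1(u_1,u_2)=u_2$, $H'_2(u_1,u_2)=u_1$, and likewise $P'_1(w_1,w_2)=w_2$, $P'_2(w_1,w_2)=w_1$; these are continuous, so hypothesis~(i) of the isoperimetric problem is met. Evaluating at $(u_1,u_2)=(\mathcal{F}_1[\tilde{x}],\mathcal{F}_2[\tilde{x}])$ and $(w_1,w_2)=(\mathcal{G}_1[\tilde{x}],\mathcal{G}_2[\tilde{x}])$ gives $H'_1=\mathcal{F}_2[\tilde{x}]$, $H'_2=\mathcal{F}_1[\tilde{x}]$, $P'_1=\mathcal{G}_2[\tilde{x}]$, $P'_2=\mathcal{G}_1[\tilde{x}]$, and substitution of these values into \eqref{iso:abn} yields exactly the displayed equation, with $\lambda_0,\lambda$ the constants furnished by Theorem~\ref{th:iso:abn}.

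For item~(ii) I would again take $n=m=2$, now with $H(u_1,u_2)=u_1/u_2$ and $P(w_1,w_2)=w_1/w_2$. This is legitimate because the standing assumption that the denominators of $\mathcal{L}$ and $\mathcal{K}$ do not vanish means $\mathcal{F}_2[\tilde{x}]\neq0$ and $\mathcal{G}_2[\tilde{x}]\neq0$, so $H$ and $P$ are of class $C^1$ near the relevant points. Here $H'_1(u_1,u_2)=1/u_2$, $H'_2(u_1,u_2)=-u_1/u_2^2$, and similarly for $P$. Substituting the evaluated derivatives $1/\mathcal{F}_2[\tilde{x}]$, $-\mathcal{F}_1[\tilde{x}]/\mathcal{F}_2[\tilde{x}]^2$, $1/\mathcal{G}_2[\tilde{x}]$, $-\mathcal{G}_1[\tilde{x}]/\mathcal{G}_2[\tilde{x}]^2$ into \eqref{iso:abn}, then multiplying the whole identity by the nonzero constant $\mathcal{F}_2[\tilde{x}]\,\mathcal{G}_2[\tilde{x}]$ and setting $Q_L=\mathcal{F}_1[\tilde{x}]/\mathcal{F}_2[\tilde{x}]$, $Q_K=\mathcal{G}_1[\tilde{x}]/\mathcal{G}_2[\tilde{x}]$, produces precisely the stated equation; clearing denominators multiplies both the $\lambda_0$-term and the $\lambda$-term by the same scalar, so the requirement $(\lambda_0,\lambda)\neq(0,0)$ is preserved.

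Both derivations are finite explicit computations, so there is no analytic obstacle; the only point that needs a word of care is the admissibility of the quotient choices of $H$ and $P$ in~(ii), which is guaranteed by the non-vanishing-denominator hypothesis. Alternatively, one could assemble~(i) and~(ii) from the Euler-Lagrange computations already performed in Corollaries~\ref{cproduct} and~\ref{cquotient} together with the multiplier rule behind Theorem~\ref{th:iso:abn}, but the direct substitution above is the shortest route.
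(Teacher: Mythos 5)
Your proposal is correct and is exactly the argument the paper intends: the corollary is a direct specialization of Theorem~\ref{th:iso:abn} with $H(u_1,u_2)=u_1u_2$, $P(w_1,w_2)=w_1w_2$ in part (i) and $H(u_1,u_2)=u_1/u_2$, $P(w_1,w_2)=w_1/w_2$ in part (ii), followed by clearing the nonzero denominators $\mathcal{F}_2[\tilde{x}]\mathcal{G}_2[\tilde{x}]$. Your attention to the non-vanishing-denominator hypothesis and to the preservation of $(\lambda_0,\lambda)\neq(0,0)$ under the rescaling covers the only points requiring care.
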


\begin{example}
\label{ex:iso}
Consider the problem
\begin{equation}
\label{ex:iso:1}
\begin{split}
\text{extremize} \quad
\mathcal{L}[x]=\frac{\int_{0}^{1}(x^{\Delta}(t))^2 \Delta
     t}{\int_{0}^{1}tx^{\Delta}(t)\Delta
     t},\\
     x(0)=0, \quad x(1)=1,
 \end{split}
\end{equation}
subject to the constraint
\begin{equation}\label{ex:iso:2}
\mathcal{K}[x]=\int_{0}^{1}tx^{\Delta}(t)\Delta t=1.
\end{equation}
Since
\begin{equation*}
g_{v}(t,x^{\sigma}(t),x^{\Delta}(t))
-\int_{0}^{t}g_{y}(\tau,x_{\sigma}(\tau),x^{\Delta}(\tau))\Delta\tau=t
\end{equation*}
there are no abnormal extremals for the problem
\eqref{ex:iso:1}--\eqref{ex:iso:2}. Applying Theorem~\ref{th:iso},
we get the delta differential equation
\begin{equation}\label{rw}
2x^{\Delta\Delta}-Q-\lambda=0,
\end{equation}
where $Q$ is the value of functional $\mathcal{L}$ in a solution of
\eqref{ex:iso:1}--\eqref{ex:iso:2}. Solving equation \eqref{rw} we
obtain
\begin{equation*}
x(t)=\frac{Q+\lambda}{2}\int_{0}^{t}\tau\Delta
\tau+1-\frac{Q+\lambda}{2}\int_{0}^{1}\tau\Delta \tau.
\end{equation*}
Therefore, a solution of \eqref{rw} depends on the time scale. Let
us consider, for example, $\mathbb{T}=\mathbb{R}$ and
$\mathbb{T}=\{0,\frac{1}{2},1\}$. On $\mathbb{T}=\mathbb{R}$ we
obtain
\begin{equation*}
x(t)=3t^{2}-2t
\end{equation*}
as a candidate local minimizer while on
$\mathbb{T}=\{0,\frac{1}{2},1\}$
\begin{gather*}
x(t)=
\begin{cases}
0 & \text{ if } t=0 \\
-1 & \text{ if } t=\frac{1}{2}\\
 1 & \text{ if } t=1.
\end{cases}
\end{gather*}
is a candidate local minimizer for the problem
\eqref{ex:iso:1}--\eqref{ex:iso:2}.
\begin{figure}[ht]
\centering
\includegraphics[scale=0.4]{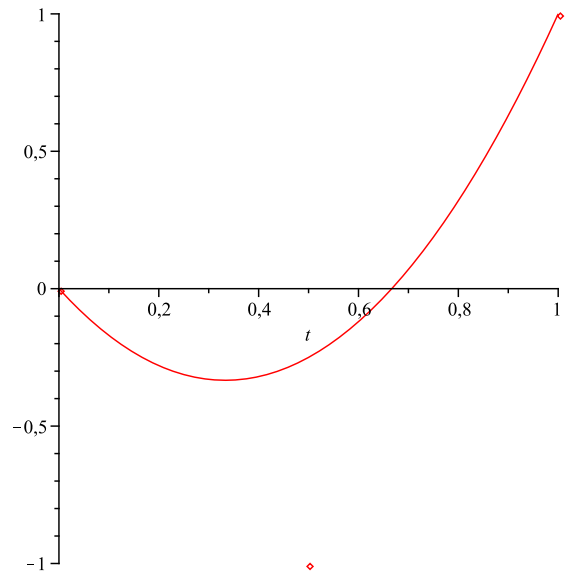}
\caption{The extremal minimizer of Example~\ref{ex:iso} for
$\mathbb{T}=\mathbb{R}$ and $\mathbb{T}=\{0,\frac{1}{2},1\}$.}
\label{fig:Ex7}
\end{figure}
\end{example}


\section*{Acknowledgements}

The authors were supported by the R\&D unit CEOC, via FCT and the EC
fund FEDER/POCI 2010. The first author was also supported by
Bia{\l}ystok University of Technology, via a project of the Polish
Ministry of Science and Higher Education ``Wsparcie miedzynarodowej
mobilnosci naukowcow''.

We are grateful to two anonymous reviewers for their comments.



\end{document}